\newcommand*{\barfix}[2][.175ex]{%
  \mathpalette{\@barfix{#1}}{#2}%
}
\newcommand*{\@barfix}[3]{%
  \vbox{%
    \kern#1\relax
    \hbox{$#2#3\m@th$}%
  }%
}
\newtheorem{theorem}{Theorem}
\newtheorem{thm}{Theorem}[section]
\newtheorem{lemma}[thm]{Lemma}
\newtheorem{claim}[thm]{Claim}
\newtheorem{remark}[thm]{Remark}
\newtheorem{question}[thm]{Question}
\newcommand{\footremember}[2]{%
    \footnote{#2}
    \newcounter{#1}
    \setcounter{#1}{\value{footnote}}%
}
\title{\vspace{-1.5cm}Percolation through Isoperimetry} 
\author{%
Sahar Diskin \footremember{alley}{\scriptsize{School of Mathematical Sciences, Tel Aviv University, Tel Aviv 6997801, Israel. Email: sahardiskin@mail.tau.ac.il.}}%
\and Joshua Erde \footremember{trailer}{\scriptsize{Institute of Discrete Mathematics, Graz University of Technology, Steyrergasse 30, 8010 Graz, Austria. Email: erde@math.tugraz.at. Research supported in part by FWF P36131. For the purpose of open access, the author has applied a CC BY public copyright licence to any Author Accepted Manuscript version arising from this submission.}}%
\and Mihyun Kang \footremember{alley2}{\scriptsize{Institute of Discrete Mathematics, Graz University of Technology, Steyrergasse 30, 8010 Graz, Austria. Email: kang@math.tugraz.at. Research supported in part by FWF W1230 and I6502.}}%
\and Michael Krivelevich \footremember{trailer2}{\scriptsize{School of Mathematical Sciences, Tel Aviv University, Tel Aviv 6997801, Israel. Email: krivelev@tauex.tau.ac.il. Research supported in part by USA–Israel BSF grant 2018267.}}%
}
\begin{document}
\maketitle

\begin{abstract}
We provide a sufficient condition on the isoperimetric properties of a regular graph $G$ of growing degree $d$, under which the random subgraph $G_p$ typically undergoes a phase transition around $p=\frac{1}{d}$ which resembles the emergence of a giant component in the binomial random graph model $G(n,p)$. We further show that this condition is tight. 

More precisely, let $d=\omega(1)$, let $\epsilon>0$ be a small enough constant, and let $p \cdot d=1+\epsilon$. We show that if $C$ is sufficiently large and $G$ is a $d$-regular $n$-vertex graph where every subset $S\subseteq V(G)$ of order at most $\frac{n}{2}$ has edge-boundary of size at least $C|S|$, then $G_p$ typically has a unique linear sized component, whose order is asymptotically $y(\epsilon)n$, where $y(\epsilon)$ is the survival probability of a Galton-Watson tree with offspring distribution Po$(1+\epsilon)$. We further give examples to show that this result is tight both in terms of its dependence on $C$, and with respect to the order of the second-largest component.

We also consider a more general setting, where we only control the expansion of sets up to size $k$. In this case, we show that if $G$ is such that every subset $S\subseteq V(G)$ of order at most $k$ has edge-boundary of size at least $d|S|$ and $p$ is such that $p\cdot d \geq 1 + \epsilon$, then $G_p$ typically contains a component of order $\Omega(k)$.
\end{abstract}

\section{Introduction}

\subsection{Background and Motivation}
In 1957, Broadbent and Hammersley \cite{BH57} initiated the study of percolation theory in order to model the flow of fluid through a medium with randomly blocked channels. In \emph{(bond) percolation}, given a \textit{host graph} $G$, the \textit{percolated random subgraph} $G_p$ is obtained by retaining every edge of $G$ independently and with probability $p$. A fundamental and extensively studied example of such a model is when the host graph $G$ is the complete graph $K_n$, which coincides with the classical \textit{binomial random graph model} $G(n,p)$.

In their seminal paper from 1960, Erd\H{o}s and R\'enyi \cite{ER60} showed that $G(n,p)$\footnote{In fact, Erd\H{o}s and R\'enyi worked in the closely related \emph{uniform} random graph model $G(n,m)$.} undergoes a dramatic phase transition, with respect to its component structure, when the expected average degree is around $1$ (that is, when $p\cdot n\approx 1$).  More precisely, given a constant $\epsilon>0$, let us define $y\coloneqq y(\epsilon)$ to be the unique solution in $(0,1)$ of the equation
\begin{align}\label{survival prob}
    y=1-\exp\left\{-(1+\epsilon)y\right\}.
\end{align}
Erd\H{o}s and R\'enyi showed the following phase transition result.
\begin{thm}[\cite{ER60}]\label{ER thm}
Let $\epsilon>0$ be a small enough constant. Then, with probability tending to $1$ as $n$ tends to infinity,
\begin{enumerate}[(a)]
    \item\label{i:sub} if $p=\frac{1-\epsilon}{n}$, then all components of $G(n,p)$ are of order $O_{\epsilon}\left(\log n \right)$; and,
    \item\label{i:sup} if $p=\frac{1+\epsilon}{n}$, then there exists a unique \emph{giant} component in $G(n,p)$ of order $(1+o(1))y(\epsilon)n$. Furthermore, all other components of $G(n,p)$ are of order $O_{\epsilon}\left(\log n\right)$.
\end{enumerate}
\end{thm}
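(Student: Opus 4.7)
The plan is to exploit the well-known coupling between component exploration in $G(n,p)$ and a Galton-Watson branching process with offspring distribution $\mathrm{Bin}(n-1,p)$, which is close to $\mathrm{Po}(1\pm\epsilon)$. Concretely, running BFS from a vertex $v$, the number of newly discovered vertices at each step is stochastically dominated by $\mathrm{Bin}(n,p)$, so the size $|C(v)|$ of the component of $v$ is dominated (and, while few vertices have been revealed, well-approximated) by the total progeny of the coupled tree. The whole proof then reduces to branching-process tail estimates, first/second moment methods, and a sprinkling argument.

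For the subcritical case (a), the coupled process $\mathrm{Po}(1-\epsilon)$ has mean $1-\epsilon$ and its total progeny has exponential tails with rate $\Theta(\epsilon^2)$. Equivalently, applying a Chernoff bound to the running BFS queue shows that with probability $1-o(n^{-1})$ the queue empties within $K_\epsilon\log n$ steps. A union bound over the $n$ starting vertices then gives that every component has size $O_\epsilon(\log n)$ with high probability.

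For the supercritical case (b), the coupled process $\mathrm{Po}(1+\epsilon)$ has survival probability $y(\epsilon)$ characterised by (\ref{survival prob}). Let $X$ denote the number of vertices lying in components of size at least $k \coloneqq K_\epsilon\log n$. A first-moment computation gives $\mathbb{E}[X] = (1+o(1))y(\epsilon)n$, and a second-moment argument, based on near-independence of BFS explorations from two distinct vertices until they collide, yields $\mathrm{Var}(X) = o(\mathbb{E}[X]^2)$, so Chebyshev concentrates $X$ around its mean. Pushing the BFS up to $n^{2/3}$ steps further establishes a gap phenomenon: with high probability every component has size either at most $k$ or at least $n^{2/3}$, since the subcritical tail estimate rules out intermediate sizes.

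The hardest step is uniqueness: the $X$ vertices could a priori form multiple linear-sized components. The standard remedy is a sprinkling argument. Write $p = 1-(1-p_1)(1-p_2)$ with $p_1 = (1+\epsilon/2)/n$ and $p_2 = \Theta(\epsilon/n)$, so that $G_p = G_{p_1}\cup G_{p_2}$ is the union of two independent percolation rounds. Applying the gap result to $G_{p_1}$ yields at most $n^{1/3}$ components of size at least $n^{2/3}$, and for any two such components $C, C'$ the probability that no $G_{p_2}$-edge joins them is at most $(1-p_2)^{|C||C'|} \leq e^{-\Omega(n^{1/3})}$. A union bound over pairs forces a merger into a single giant of the asserted size $(1+o(1))y(\epsilon)n$, while all other components have size $O_\epsilon(\log n)$, completing (b).
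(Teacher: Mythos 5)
The paper does not prove this statement: it is the classical Erd\H{o}s--R\'enyi theorem, cited from \cite{ER60}, and the paper only recalls the branching-process heuristic behind it. Your outline is the canonical modern proof (exploration/branching-process coupling, first and second moments for the number of vertices in large components, a gap lemma ruling out intermediate component sizes, and sprinkling to merge the large pieces), and it is in fact the same template the authors use for their Theorem~\ref{th: weak expander} (compare their Lemmas \ref{l: |V_S(G(1))|}--\ref{l: |V_L(G(j))|} and the multi-round exposure with $p_2=p_3=\delta/d$). So as a route to the statement it is sound and standard.

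One quantitative point needs repair. You take $p_1=(1+\epsilon/2)/n$ and $p_2=\Theta(\epsilon/n)$, apply the counting and gap results to $G_{p_1}$, and then merge. But the vertices you are guaranteed to merge are those in large components of $G_{p_1}$, of which there are only $(1+o(1))y(\epsilon/2)n$, whereas the claimed giant has order $(1+o(1))y(\epsilon)n$; your first-moment computation of $\mathbb{E}[X]$ at parameter $p$ gives the matching \emph{upper} bound but not the lower one. The fix is to make the sprinkling probability asymptotically negligible, e.g.\ $p_2=n^{-4/3}\log^2 n$, so that $p_1=(1+\epsilon-o(1))/n$ and continuity of $y$ gives $(1+o(1))y(\epsilon)n$ vertices in large components of $G_{p_1}$, while still $(1-p_2)^{n^{4/3}}\le n^{-\log n}$ suffices for the union bound over pairs. (This is exactly why the paper, which sprinkles with the non-negligible probability $\delta/d$, only obtains $|L_1|$ up to an additive error depending on $\delta$ rather than an exact asymptotic.) Separately, your justification of the gap phenomenon as following from ``the subcritical tail estimate'' is loose: intermediate sizes are excluded by a Chernoff bound showing the supercritical exploration queue, once it has survived $K_\epsilon\log n$ steps, stays positive up to time $n^{2/3}$ with probability $1-o(n^{-1})$; the subcritical estimate itself does not apply at $p=(1+\epsilon)/n$.
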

We refer the reader to \cite{B01, FK16, JLR00} for systematic coverage of random graphs, and to the monographs \cite{BR06, G99, K82} for more background on percolation theory. 

Let us say a little about a proof of Theorem \ref{ER thm}. A standard heuristic (see, for example, \cite[Chapter 10.4]{AS16}) shows that, since every vertex in the host graph has degree $\approx n$, a local exploration process of the clusters in $G(n,p)$ can be approximated by a branching process where the expected number of children is $pn$. From this, one can deduce that each vertex is contained in a `large' component, in this case linear sized, with probability $\approx y(\epsilon)$ where $\epsilon=np-1$, and what remains is to show that these large clusters `merge' into a unique `giant' component (throughout the rest of the paper, we will often refer to a linear sized component as a giant component).

This heuristic implies that the threshold probability depends only on the local structure of the graph, the vertex degrees, 
whereas the order of the largest component above this threshold grows with the order of the host graph. In the case of $G(n,p)$, these two parameters are coincidentally approximately equal. However, more generally this heuristic suggests that we should see a sharp change in the order of the largest component in a random subgraph of an arbitrary $d$-regular host graph $G$, for $d=\omega(1)$, when $p$ is around $\frac{1}{d}$.

In the subcritical regime, it is known (and it is fairly easy to show) that for $d$-regular graphs the behaviour of Theorem \ref{ER thm} \ref{i:sub} is universal, in that when $p=\frac{1-\epsilon}{d}$, typically all components of $G_p$ are of order at most $O_{\epsilon}(\log |V(G)|)$ (see \cite[Theorem 1]{DEKK22} and also \cite{NP10}). In the supercritical regime, consider first a disjoint union of copies of $K_{d+1}$ --- the largest component (whether we percolate or not) is then of order at most $d$. Thus, for such a phase transition to typically occur, one needs some additional conditions on the edge distribution of the host graph. Indeed, this phase transition has been studied in certain specific families of host graphs. In particular, in the case of the $d$-dimensional hypercube $Q^d$, the pioneering work of Ajtai, Koml\'os, and Szemer\'edi~\cite{AKS81} and of Bollob\'as, Kohayakawa, and \L{}uczak~\cite{BKL92} shows that the component structure of a supercritical percolated hypercube is quantitatively similar to that in Theorem \ref{ER thm} \ref{i:sup}, in that the largest component has asymptotic order $y(\epsilon)|V(Q^d)|$ and the second-largest component has order $O_\epsilon(\log |V(Q^d)|)$. This result has been further generalised by the authors to any high-dimensional product graph, whose base graphs are regular and of bounded degree \cite{DEKK22}. In a different context, this phenomenon has also been observed in pseudo-random graphs, which lack any clear geometric structure but have fairly good expansion. In this setting Frieze, Krivelevich, and Martin \cite{FKM04} showed that when $\frac{\lambda}{d}=o(1)$, supercritical percolated $(n,d,\lambda)$-graphs demonstrate similar behaviour in terms of their component structure.

In all these cases, the proofs rely heavily on the \textit{expansion properties} of the host graph. However, in this regard, high-dimensional product graphs and pseudo-random graphs are quite dissimilar. The former are well-structured with respect to their geometry and display a high level of symmetry, yet they are (generally) not very good expanders, whereas the latter are much better expanders and have very uniform edge-distribution, but lack any obvious geometric structure or symmetry.

In light of this, it is perhaps natural to ask if a sufficiently strong assumption on the expansion of the host graph $G$ alone is sufficient to guarantee a similar phase transition as in $G(n,p)$, without any further assumptions of geometric structure or pseudo-randomness, and if so, what are the minimal such requirements.
\begin{question}\label{q: refined}
    Let $d=\omega(1)$, and let $G$ be a $d$-regular graph on $n$ vertices. What requirements on the expansion of $G$ do guarantee the typical existence of a component of asymptotic size $y(\epsilon)n$ in the supercritical regime? 
\end{question}
To make this question more precise, we will need some way to quantify the expansion properties of the host graph. A well-known notion, capturing in part the expansion properties, is the \textit{isoperimetric constant} of the graph, also known as the \textit{Cheeger} constant due to its connection to the Cheeger isoperimetric constant of a Riemannian manifold \cite{C70}.

Given a graph $G=(V,E)$ and a subset $S\subseteq V(G)$, we write $\partial_G(S)$ for the size of the edge-boundary of $S$, that is, the number of edges in $G$ with one endpoint in $S$ and the other endpoint in $S^C\coloneqq V\setminus S$. The \emph{isoperimetric constant} of $G$ is defined as 
\begin{align*}
    i(G)\coloneqq \min_{S\subseteq V, 1\le |S|\le |V|/2}\left\{\frac{\partial_G(S)}{|S|}\right\}.
\end{align*}
For example, it follows from Harper's Theorem \cite{H64} that $i(Q^d)=1$, whereas if $G$ is an $(n,d,\lambda)$-graph with $\frac{\lambda}{d}=o(1)$, then standard results imply $i(G)=(1+o_d(1))\frac{d}{2}$ (see, for example, \cite{KS06}). 

Graphs with \emph{constant} degree and $i(G)>0$ are known as \emph{expander graphs} and have turned out to be very important in diverse areas of discrete mathematics and computer science (we refer the reader to \cite{HLW06} for a comprehensive survey on expander graphs and their applications). Question \ref{q: refined} has been studied in the context of constant degree expander graphs. For constant degree expanders with high-girth, the phase transition occurs around $p=\frac{1}{d-1}$ (note that when $d=\omega(1)$ and outside the critical window, the scaling of $\frac{1}{d}$ suffices, whereas in the constant degree case, this difference matters). Indeed, Alon, Benjamini, and Stacey \cite{ABS04} showed that if $G$ is an expander, then there exists at most one linear sized component in $G_p$ for any $p$ and under the further assumption that the graph has high-girth, they showed the existence of a unique giant component in the supercritical regime (that is, when $p\cdot (d-1)>1$), whose asymptotic order was determined by Krivelevich, Lubetzky, and Sudakov \cite{KLS20}. Subsequent work by Alimohammadi, Borgs, and Saberi \cite{ABS23} generalised this to digraph expanders with constant \textit{average} degree. Let us further mention the work of Benjamini, Nachmias, and Peres \cite{BNP11}, who showed that under extra assumptions on the host graph there is an $o(1)$-sharp threshold for the appearance of a giant component, and conjectured its universality. Subsequent work by Benjamini, Boucheron, Lugosi, and Rossignol \cite{BBLR12} showed the existence of a sharp threshold for the existence of a giant component of order $cn$ for any $c\in(0,1)$. 

Questions of a similar flavour to Question \ref{q: refined} have also been studied under the assumption that the host graph is transitive. In particular, Easo \cite{E22} showed necessary and sufficient conditions for the existence of a percolation threshold in vertex-transitive graphs, and Easo and Hutchcroft \cite{EH21} showed sufficient conditions for the existence of a unique giant component above the percolation threshold in vertex-transitive graphs. It is further worth noting the work of Alimohammadi, Borgs, and Saberi \cite{ABS22}, which studied the emergence of a giant component when the host graph is an expander with bounded average degree, in a more general percolation setting.

\subsection{Main results}
Towards answering Question \ref{q: refined}, our first main result shows that a fairly weak assumption on the isoperimetric constant of the host graph is sufficient to guarantee the existence of a component whose asymptotic order is arbitrarily close to $y(\epsilon)n$. 
\begin{theorem}\label{th: weak expander}
Let $d=\omega(1)$, and let $G$ be a $d$-regular $n$-vertex graph. Let $\epsilon>0$ be a small constant and let $p=\frac{1+\epsilon}{d}$. Then, there exists a constant $C_0\coloneqq C_0(\epsilon)>0$, such that for every $C\ge C_0$ the following holds. If $i(G)\ge C$, then with probability at least $1-\exp\left\{-\frac{\sqrt{C}n}{d}\right\}$,
\begin{align*}
    \big||L_1|-yn\big|\le \frac{10n}{C^{1/20}},
\end{align*}
where $y=y(\epsilon)$ is as defined in \eqref{survival prob} and $L_1$ is the largest component of $G_p$. Furthermore, the second-largest component of $G_p$ is of order at most $(1+o_d(1))\frac{10n}{C^{1/20}}$.
\end{theorem}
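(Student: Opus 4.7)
The plan is to combine a truncated BFS exploration with a sprinkling step, following the classical approach. Pick $\eta = \eta(C,\epsilon) > 0$ tending to $0$ as $C \to \infty$ (tuned so that $|y(\epsilon) - y(\epsilon-\eta)| = O(C^{-1/20})$), decompose $p$ as $(1-p_1)(1-p_2) = 1 - p$ with $p_1 = (1+\epsilon-\eta)/d$ and $p_2 \approx \eta/d$, so that $G_p$ is distributed as $G_{p_1} \cup G_{p_2}$ with independent rounds, and fix a truncation threshold $k_0 = \Theta(\log C)$. For each $v \in V(G)$, we run a BFS of its $G_{p_1}$-component, halted at size $k_0$, and set $Y_v = \mathbf{1}[\text{BFS reaches size } k_0]$. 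Using $i(G) \geq C$ on the growing explored set $S$ (while $|S| \leq k_0 \leq n/2$), we have $|\partial_G(S)| \geq C|S|$, so back-edges at the frontier are sparse; this couples the BFS-size process, up to the truncation, with a Galton--Watson tree of offspring $\mathrm{Bin}(d,p_1) \approx \mathrm{Po}(1+\epsilon-\eta)$. The tree's survival probability $y(\epsilon-\eta)$ matches $y(\epsilon)$ to within $O(C^{-1/20})$, so $\mathbb{E}[\sum_v Y_v] = (y \pm O(C^{-1/20}))n$; an edge-exposure martingale (using that each edge of $G$ perturbs at most $O(k_0)$ of the $Y_v$), combined with Azuma's inequality, then yields $\sum_v Y_v = (y \pm O(C^{-1/20}))n$ with failure probability $\exp(-\Omega(\sqrt{C}n/d))$.

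Next, let $A = \{v : v \text{ is in a } G_{p_1}\text{-component of size} \geq k_0\}$. We claim that after the $p_2$-sprinkling, all of $A$ lies in a single $G_p$-component. Otherwise, $A$ splits as $S \sqcup T$ into non-trivial unions of large $G_{p_1}$-components with no $G_{p_2}$-edge crossing. Applying isoperimetry to whichever of $S, T$ has size $\leq n/2$ gives $\geq C|S| \geq Ck_0$ boundary edges in $G$; after discounting those going into $V \setminus A$ (which, since $|V \setminus A|$ is small by the first step, contribute only marginally), at least $\Omega(Ck_0)$ such edges land in $T$, each absent from $G_{p_2}$ with probability $(1-p_2)^{\Omega(Ck_0)} = \exp(-\Omega(\eta C k_0 / d))$. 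A union bound over the at most $2^{n/k_0}$ such bipartitions (crucially, each side is a union of entire large $G_{p_1}$-components, not an arbitrary subset) closes the argument, giving $L_1 \supseteq A$ and hence $|L_1| \geq (y - O(C^{-1/20}))n$.

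For the matching upper bound on $|L_1|$: any vertex in a $G_p$-component of size $\geq k_0$ either lies in $A$ or attaches to $A$ via a $G_{p_2}$-edge carrying its small $G_{p_1}$-component along; the total attached contribution is bounded by $k_0 \cdot |E(G_{p_2})| = O(\eta n k_0)$, absorbable into $o(n / C^{1/20})$. This yields $|L_1| \leq (y + O(C^{-1/20}))n$. For $|L_2|$: since $L_2 \subseteq V \setminus L_1$ must be a union of small $G_{p_1}$-components joined via $G_{p_2}$-edges, if $|L_2| \geq 10 n / C^{1/20}$ then $|L_2| \leq n/2$ gives $\partial_G(L_2) \geq C|L_2|$, the majority of which must go into $L_1$ (as $|V \setminus L_1|$ is small); their simultaneous absence from $G_p$ is exponentially unlikely and yields a contradiction via a union bound analogous to the merging step.

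The hard part will be the merging step: the isoperimetric inequality $\partial_G(S) \geq C|S|$ counts all edges leaving $S$, not specifically those going into the target set $T = A \setminus S$. Controlling the "wasted" boundary edges into $V \setminus A$, and balancing this against the sprinkling budget, is where the parameter choices must be tuned carefully; the polynomial loss $C^{-1/20}$ in the error bounds reflects exactly this slack. A similar issue governs the upper bound on $|L_2|$ in the final step.
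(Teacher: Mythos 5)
Your first step (truncated BFS, coupling with a Galton--Watson tree, Azuma concentration of the number of vertices reaching size $k_0$) is sound and close in spirit to what the paper does, but the merging step contains two genuine gaps, and they are exactly where the real difficulty of the theorem lies. First, with the truncation threshold $k_0=\Theta(\log C)$, the sprinkling cannot merge the ``large'' components. A single $G_{p_1}$-component $S$ with $|S|=k_0$ has only $\partial_G(S)\ge Ck_0$ boundary edges, and the expected number of these retained by the sprinkle $p_2\approx \eta/d$ is $\Theta(\eta Ck_0/d)=o(1)$ since $d=\omega(1)$ and $C,\eta,k_0$ do not depend on $d$; so with probability $1-o(1)$ such a component acquires \emph{no} crossing edge at all. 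Your union bound reflects the same problem: $2^{n/k_0}$ component-respecting bipartitions against a per-partition failure probability of only $\exp\{-\Omega(\eta Ck_0/d)\}$ is hopeless. The paper's resolution is to call a component ``large'' only when it has order at least $\delta d$ (so there are at most $n/(\delta d)$ of them, giving entropy $2^{n/(\delta d)}$), to consider only bipartitions in which \emph{both} sides have linear order $\ge\delta n$ (accepting a $\delta n$ loss in $|L_1|$, which is absorbed into the $10n/C^{1/20}$ error term), and to boost the per-partition success probability to $1-\exp\{-\Omega(C\delta^{10}n/d)\}$.

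Second, your claim that $|V\setminus A|$ is small, so that ``only marginally'' many boundary edges are wasted, is false: $|V\setminus A|\approx(1-y(\epsilon))n$, which for small $\epsilon$ is almost all of the graph. Consequently $\partial_G(S)\ge C|S|$ gives no control whatsoever on $e_G(S,T)$; all $C|S|$ boundary edges could land on vertices in small components. This is precisely the obstacle that forces the paper's central technical lemma: one shows that all but $\exp\{-\Omega(d)\}n$ vertices have $\Omega(d)$ neighbours lying in large components of $G(2)$ (via a delicate modified BFS argument), which allows one to enlarge $A$ and $B$ to sets $A'$ and $B'$ covering almost all of $V(G)$, apply isoperimetry to $A'$ to get $\Omega(Ck)$ edges between $A'$ and $B'$, extract a matching of size $\Omega(Ck/d)$ of these in the sprinkle, and then extend each matching edge to a path of length three back into $A$ and $B$ (each extension succeeding with \emph{constant} probability, since the endpoints have $\Omega(d)$ neighbours in $A$, resp.\ $B$). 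Without this routing mechanism, or some substitute for it, neither your lower bound on $|L_1|$ nor your bound on $|L_2|$ (which relies on the same false premise) goes through. You correctly flag the wasted-boundary issue as ``the hard part,'' but the proposal supplies no idea for overcoming it, and the one it gestures at does not work.
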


We note that in fact an even weaker assumption on the isoperimetry of the host graph, restricted to sets of linear size, suffices (see Remark \ref{r: key theorem}). It is worth noting here that in a recent work of Borgs and Zhao \cite{BZ23}, it was shown that under somewhat similar conditions on the isoperimetry of the host graph, together with \textit{bounded} average degree assumption, there exists a local sampling algorithm which provides, among other things, a robust estimate for the order of the largest component.  

Furthermore, it suffices to take $C_0(\epsilon)=\Omega\left(\epsilon^{-40}\right)$. Observe that for constant $C$ this does not quite answer Question \ref{q: refined}, but when taking ${C=C(d) \to \infty}$ so that $\frac{\sqrt{C}n}{d}\to\infty$, the result implies that \textbf{whp}\footnote{With high probability, that is, with probability tending to $1$ as $d$ tends to infinity.} $|L_1|=\left(1+o_d(1)\right)yn$ and all other components have sublinear order. In particular, when $G$ is an $(n,d,\lambda)$-graph with $\lambda=o(d)$, we have that $i(G)\ge \left(1-o(1)\right)\frac{d}{2}$, and so Theorem \ref{th: weak expander} implies the typical existence of a giant component of order $(1+o_d(1))yn$ in the supercritical regime, which was first shown in \cite{FKM04}. 

On the other hand, in the case of the hypercube $Q^d$, a subcube $S$ of size $2^{d-1}$ has an edge-boundary of size only $|S|$. Nevertheless, we note that an ad-hoc adjustment of our method, utilising Harper's inequality~\cite{H64}, can give a proof for the typical existence of a giant of the correct asymptotic order in supercritical $Q^d_p$ (see Remark \ref{r: key theorem}). 

Since, in general, our methods will always lead to some inverse polynomial dependence on $\epsilon$, we have not attempted to optimise the power of $C$ in the estimate of Theorem \ref{th: weak expander}, nor the power $\epsilon$ in the estimate of $C_0$. Nevertheless, the above example suggests it would be very interesting to see if the dependence here could be significantly improved. 

We remark that our proof technique of Theorem \ref{th: weak expander} carries on to $p=\frac{c}{d}$ for any constant $c>1$, showing that \textbf{whp}, $|L_1|=(1+o(1))y(c-1)n$ (under an appropriate assumption on $C_0=C_0(c)$). Furthermore, our argument allows also for $\epsilon=\epsilon(d)$ which tends slowly to zero as $d$ tends to infinity.

Theorem \ref{th: weak expander} is tight in two senses. Firstly, there exist a $d$-regular graph $G$ whose isoperimetric constant is bounded from below and some supercritical probability $p$ such that \textbf{whp} $G_p$ contains no linear sized component in the supercritical regime, and in fact only (poly-)logarithmically sized components.
\begin{theorem}\label{th: construction}
Let $C>1$ be a constant, let $d = \omega(1)$, let $n=\omega(d^2)$, and let $p\le\frac{C}{d}$. Then, there exists a $d$-regular $n$-vertex graph $G$ with $i(G) \ge \frac{1}{40C}$ such that \textbf{whp} all the components in $G_p$ are of order at most $3d\log n$.
\end{theorem}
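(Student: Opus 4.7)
The plan is to construct an explicit $d$-regular graph $G$ realising both properties, by blowing up a sparse expander so that intra-cluster percolation confines each local giant to a single cluster and the cluster-level process remains subcritical. Let $m := n/(3d)$, let $\Gamma$ be a $3$-regular Ramanujan graph on $m$ vertices (so $i(\Gamma) \ge 3/2 - o(1)$), and partition $V(G)$ into $m$ ``groups'' of $3d$ vertices each, indexed by $V(\Gamma)$. On each group I would place a near-$d$-regular expander on its $3d$ vertices, reserving $k := 3d/(20C)$ half-edges per group for bridges; for each edge of $\Gamma$ one then places $d/(20C)$ bridge edges between the two corresponding groups, and a few standard edge-swaps make $G$ exactly $d$-regular.

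For the isoperimetric bound, take any $S \subseteq V(G)$ with $|S|\le n/2$ and let $S_\Gamma$ be the set of whole groups contained in $S$. The bridges crossing the corresponding $S_\Gamma$-cut in $\Gamma$ contribute at least $(d/(20C)) \cdot i(\Gamma) \cdot |S_\Gamma| \ge (d/(20C))\cdot(3/2)\cdot|S_\Gamma|$ to $\partial_G(S)$, and dividing by $|S|\ge 3d|S_\Gamma|$ already gives the target ratio $1/(40C)$ in the cluster-respecting case. For cuts that split individual groups, the intra-group expansion adds $\Omega(d)$ to $\partial_G(S)$ per split vertex, so one concludes $i(G)\ge 1/(40C)$ throughout. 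For the component size, standard percolation results for $(n,d,\lambda)$-graphs (as in Frieze--Krivelevich--Martin) imply that the intra-group percolation at $p \le C/d$ produces a unique intra-giant of size $\sim 3d\cdot y(C-1)$ per group, with all other intra-components of size $O(\log d)$. A BFS exploration of any component of $G_p$ then grows by picking up intra-giants of groups reachable via surviving ``giant-to-giant'' bridges, whose expected number per group is at most $k \cdot (C/d) \cdot y(C-1)^2 = 3y(C-1)^2/20 < 1$ for any $C>1$ (since $y(C-1) < 1$). Standard concentration for subcritical branching processes plus a union bound over the $n$ starting vertices give that \textbf{whp} the BFS visits at most $O(\log n)$ groups, producing components of $G_p$ of total size at most $O(d\log n)\le 3d\log n$.

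The main difficulty is navigating the tension between (a) $i(G) \ge 1/(40C)$, which demands enough inter-group bridges, and (b) subcritical group-level branching, which demands few inter-group bridges. The choice of $3d/(20C)$ bridges per group is tuned so that (a) is attained at equality (up to constants) and (b) gives expected branching factor $3y(C-1)^2/20$, which is below $1$ precisely because $y(C-1) < 1$ for $C>1$. A minor technical wrinkle is handling small intra-components that occasionally attach to a giant via a surviving external edge; since each such component has $O(\log d)$ vertices and expected $O(\log d/d) = o(1)$ surviving external edges, \textbf{whp} they contribute only lower-order corrections to the total component size.
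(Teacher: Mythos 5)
Your overall architecture -- a sparse expander ``backbone'' whose percolation is cluster-level subcritical, decorated with dense local gadgets that absorb almost all of each vertex's degree -- is the same idea as the paper's. The paper realises it differently: the backbone $H_0$ is a $d_1$-regular $(n/(d+2),d_1,\lambda)$-graph with $d_1=d/(Ce^2)=\omega(1)$, and each backbone vertex carries a pendant $(d+1)$-clique (minus a matching) rather than being blown up into a group; the component bound then comes from a direct tree-count in $H_0$ using $e d_1 p = e^{-1}<1$, which avoids any discussion of intra-gadget giants or small components attaching via external edges. Your bridge-bundle blow-up of a constant-degree expander is a legitimate alternative route, and your cleanest subcriticality bound is actually the one you did not emphasise: the number of surviving bridges leaving any group is dominated by $\mathrm{Bin}\bigl(3d/(20C),\,C/d\bigr)$ with mean $3/20$, irrespective of giants, so the contracted group-level graph is subcritical percolation on a $3$-regular graph and a tree-count gives $O(\log n)$ groups per component.

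There is, however, a concrete error in your isoperimetry computation. A $3$-regular Ramanujan graph does \emph{not} satisfy $i(\Gamma)\ge 3/2-o(1)$: the Cheeger-type bound obtainable from the eigenvalue condition is only $i(\Gamma)\ge (3-\lambda_2)/2\ge (3-2\sqrt{2})/2\approx 0.086$, and no $3$-regular graph has isoperimetric constant anywhere near $3/2$ (the deficit $d/2-i$ is $\Theta(\sqrt{d})$ even for random regular graphs, which is not $o(d)$ at constant degree -- this is exactly why the paper insists on a backbone of degree $d_1=\omega(1)$, where $\lambda=O(\sqrt{d_1})=o(d_1)$ gives $i(H_0)\ge d_1/3$). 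With the correct value of $i(\Gamma)$ your group-respecting cuts yield only about $0.086\cdot\frac{d/(20C)}{3d}\approx \frac{1}{700C}$, well short of the claimed $\frac{1}{40C}$. This is repairable -- take $\Gamma$ to be $r$-regular for a large constant $r$ with $b$ bridges per $\Gamma$-edge, keep $rbC/d$ bounded below $1$ for subcriticality, and let $r\to\infty$ so that $i(\Gamma)/r\to 1/2$ -- but the constants must then be re-derived, and your final step ``$O(d\log n)\le 3d\log n$'' also needs the implied constant pinned down (visiting even $1.1\log n$ groups of size $3d$ already exceeds $3d\log n$). As written, the quantitative claims $i(G)\ge \frac{1}{40C}$ and the bound $3d\log n$ are not established.
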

Secondly, the bound on the size of the second-largest component in Theorem \ref{th: weak expander} is close to tight, in that there exist $d$-regular graphs $G$ whose isoperimetric constant is arbitrarily large and supercritical probabilities $p$ for which the second-largest component in $G_p$ has almost linear size.
\begin{theorem}\label{th: construction 2}
Let $d=\omega(1)$, let $C\ge 1$ with $7C\le d$, and let $n\ge 10C\cdot d$. let $\epsilon>0$ be a sufficiently small constant, and let $p=\frac{1+\epsilon}{d}$. Then, there exists a $d$-regular $n$-vertex graph $G$ with $i(G)\ge C$, such that the second-largest component of $G_p$ is of order at least $\epsilon d$ with probability at least $1-\exp\left\{-\frac{n}{\exp\{30C\}d}\right\}-o_d(1)$.
\end{theorem}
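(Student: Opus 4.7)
The plan is to take $G$ to be a ``blow-up'' of a sparse expander on $N\approx n/d$ vertices, with each expander-vertex replaced by a near-clique of size $m\approx d$, so that each cluster percolates supercritically and produces a local giant of order $\Theta(\epsilon d)$, while the sparse inter-cluster edges still enforce $i(G)\ge C$. Concretely, set $c=3C$, $m=d+1-c$, $N=\lfloor n/m\rfloor$, and partition $V(G)=V_1\sqcup\cdots\sqcup V_N$ with $|V_i|=m$ and $G[V_i]=K_m$, giving internal degree $d-c$. I would fix a $c$-regular graph $H$ on $[N]$ with $i(H)\ge c/3$ (a random $c$-regular graph will do), and for each $\{i,j\}\in E(H)$ add a perfect matching of $m$ edges between $V_i$ and $V_j$. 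Each vertex then has $c$ external edges, making $G$ a $d$-regular graph.

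For the isoperimetric bound, given $S\subseteq V(G)$ with $|S|\le n/2$, I write $s_i=|S\cap V_i|$, $t_i=\min(s_i,m-s_i)$, and $I=\{i:s_i>m/2\}$. The within-cluster contribution to $\partial_G(S)$ is $\sum_i s_i(m-s_i)\ge (m/2)\sum_i t_i$, while the between-cluster contribution (with suitable matchings, obtained for instance by the probabilistic method) is at least $m\cdot\partial_H(I)\ge (cm/3)|I|$ when $|I|\le N/2$. A short case analysis, according to whether $\sum_i t_i$ or $m|I|$ dominates $|S|$, then gives $\partial_G(S)\ge C|S|$ under our choices $c=3C$ and $m\ge d/2$ (both satisfied since $7C\le d$).

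For the percolation analysis, I define the event $B_i$ that (i) $G[V_i]_p$ contains a component of size $\ge \epsilon d$, and (ii) every external edge incident to $V_i$ is absent in $G_p$. Event (i) holds with probability $1-o_d(1)$ by the Erd\H os--R\'enyi theorem (Theorem~\ref{ER thm}) applied inside $K_m$ (here $mp\to 1+\epsilon$, and $y(\epsilon)m>\epsilon d$ for $d$ large, using that $y(\epsilon)/\epsilon\to 2$ as $\epsilon\to 0$). Event (ii) involves $cm=O(Cd)$ independent edges, so its probability is $(1-p)^{cm}\ge e^{-\Theta(C)}$, and it is independent of (i) since they depend on disjoint edges. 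Whenever $B_i$ occurs, $V_i$ contains an entire component of $G_p$ of order $\ge \epsilon d$, so the task reduces to showing $\sum_i\mathbf{1}_{B_i}\ge 2$ with the stated probability.

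The remaining obstacle is that the $B_i$ are positively correlated through shared external edges, which I plan to handle by passing to an independent set $J_0\subseteq[N]$ of $H$ of size $\ge N/(c+1)=\Theta(n/(Cd))$: for $i,j\in J_0$, the external edges of $V_i$ and $V_j$ lie in disjoint matchings, so the events $\{B_i:i\in J_0\}$ are mutually independent. A standard Chernoff bound applied to $\sum_{i\in J_0}\mathbf{1}_{B_i}$, whose expectation is $\Omega(n/(e^{\Theta(C)}d))$, then yields $\sum_{i\in J_0}\mathbf{1}_{B_i}\ge 2$ with probability at least $1-\exp\{-\Omega(n/(e^{\Theta(C)}d))\}$, absorbing into the $\exp\{-n/(e^{30C}d)\}$ in the statement (the $o_d(1)$ residue accounts for possible failure of event (i)). The hard part of the plan is the balancing act in choosing $c$: it must be small enough that each $G[V_i]$ stays clique-like so the Erd\H os--R\'enyi giant appears, yet large enough that the expansion of $H$ yields $i(G)\ge C$, and the isoperimetric bound has to be checked uniformly across all $S$, including mixed, non-cluster-aligned ones.
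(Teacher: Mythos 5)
Your construction is genuinely different from the paper's, though it rests on the same idea: weakly interconnected near-cliques of size about $d$, an $e^{-\Theta(C)}$ fraction of which get fully isolated under percolation, each then producing its own local giant of order about $2\epsilon d$. The paper instead starts from a $3C$-regular expander $H$ on all $n$ vertices, takes an equitable proper colouring of $H$ into classes of size $d-3C+1$ (Hajnal--Szemer\'edi), and adds a clique on each colour class. This makes the isoperimetry completely free: $H$ is a spanning subgraph of $G$, so $i(G)\ge i(H)\ge C$ with no case analysis. For the percolation step the paper counts isolated classes and concentrates via Azuma--Hoeffding, whereas you decorrelate by passing to an independent set of $H$ and apply Chernoff; your device is clean and correct (the events $B_i$ for $i\in J_0$ depend on pairwise disjoint edge sets, and the two halves of each $B_i$ are independent), and the final reduction to ``at least two isolated clusters each containing a local giant'' matches the paper's.

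The genuine gap is in your isoperimetric bound. The claim that the between-cluster contribution is at least $m\cdot\partial_H(I)$ is false: the matching between $V_i$ and $V_j$ has only $m$ edges in total, and the number of them crossing the boundary of $S$ is in the worst case only $|s_i-s_j|$ --- the adversary chooses $S$ after the matchings are fixed and can align $S\cap V_i$ with $S\cap V_j$ along the matching, so no probabilistic choice of matchings helps. For instance $s_i=0.6m$, $s_j=0.4m$ gives $i\in I$, $j\notin I$ but possibly only $0.2m$ crossing edges. A correct argument must feed the loss back into the within-cluster term (each crossing $H$-edge contributes at least $m-t_i-t_j$ matching edges, so the between-cluster total is at least $m\,\partial_H(I)-c\sum_i t_i$, and when $\sum_i t_i$ is large your first case takes over); once this is done the constants $c=3C$, $m\ge 4C+1$ forced by $7C\le d$ become tight and may need retuning, and the case $|I|>N/2$, where $\partial_H(I)\ge (c/3)|I|$ is unavailable, must be treated separately. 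None of this is fatal --- the construction does expand --- but as written the key intermediate inequality is wrong and the case analysis is not carried out. Adopting the paper's trick of placing the expander on the full vertex set avoids the issue entirely.
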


We note that in Theorem \ref{th: construction 2}, $C$ can, but does not have to, depend on $d$ and tend slowly to infinity. Furthermore, note that for any $d=o\left(\frac{n}{\exp\{30C\}}\right)$, we have that \textbf{whp} the second-largest component is of order at least $\epsilon d$. Finally, we remark that in the constructions given for Theorems \ref{th: construction} and \ref{th: construction 2}, there are some suitable parity assumptions on $n$, $d$, and $C$, which we leave implicit here in order to simplify the statements of the theorems.

As mentioned before, the connection between the phase transition in a percolated subgraph and the isoperimetric properties of the host graph has been studied before for \textit{constant}-degree high-girth expanders. To be more precise, given fixed constants $d\in \mathbb{N}$ and $i>0$, in their pioneering paper, Alon, Benjamini, and Stacey \cite{ABS04} showed that for a $d$-regular graph $G$ with $i(G) \geq i$  and high-girth, $G_p$ undergoes a phase transition around $p=\frac{1}{d-1}$, where the order of the largest component grows from sublinear to linear. Furthermore, they showed that the second-largest component in $G_p$ in the supercritical regime is typically of order $O(n^c)$, for some $c<1$.
Subsequent work by Krivelevich, Lubetzky, and Sudakov \cite{KLS20} determined that the typical asymptotic order of the largest component in the supercritical regime, when $p=\frac{1+\epsilon}{d-1}$, is $y(\epsilon)n$, and demonstrated the existence of a high-girth constant-degree expander $H$ such that the second-largest component of $H_p$ in the supercritical regime is of order $\Omega(n^c)$, for any $c<1$.

Let us briefly compare this to our results, where instead the degree $d$ is tending to infinity. Whilst we have a similar phase transition in terms of the size of the largest component when $p$ is around $\frac{1}{d}$, we require a stronger assumption on the isoperimetric constant, $i(G)\ge C$, for some large $C$. Theorem \ref{th: construction} shows that this is in some sense necessary, demonstrating a difference between the constant degree and growing degree setting. Moreover, whilst our assumption on the isoperimetric constant is stronger, let us note that any $(n,d,\lambda)$-graph $G$ satisfies $i(G)=\Theta(d)$, and that for every $S\subseteq V(Q^d)$ with $|S|\le |V(Q^d)|^{1-c}$, for any $c<1$, we have $\partial_{Q^d}(S)=\Theta(d)|S|$. It is thus perhaps surprising that Theorem \ref{th: weak expander} holds \textbf{whp} for any large enough $C$, with no dependence on $d$ in how \textit{quickly} we need $C$ to tend to infinity.

Furthermore, outside of the largest component, the typical component structure in the supercritical regime is quantitatively different --- in the constant degree setting the second-largest component is \textbf{whp} of order $O(n^c)$ for some $c<1$, while when the degree is growing, Theorem~\ref{th: construction 2} shows that the second-largest component can be typically of order $\Omega(n/t)$, for any function $t$ tending to infinity arbitrarily slowly, demonstrating a stark difference between the two settings. 

Finally, we will also consider what we can say about Question \ref{q: refined} when we have rather less control over the expansion of the host graph. Indeed, let us suppose that we do not have good control on the isoperimetric constant of $G$, or even over the expansion of linear sized sets as in Remark \ref{r: key theorem}, but rather a more `restricted' control over the expansion of $G$, up to sets of a fixed size $k$. In the spirit of Question \ref{q: refined}, can we still hope to determine the existence of some `large' component? In this case, a graph consisting of many disjoint expanding graphs of size $2k$ shows that we cannot hope to find a component of order larger than $2k$. Our second main result demonstrates that we can guarantee \textbf{whp} a component of order $\Omega(k)$, when the probability $p$ is sufficiently large with respect to our local control on the expansion of $G$.

\begin{theorem}\label{th: general statement}
Let $k=\omega(1)$, $d\le k$ and let $G$ be a graph on more than $k$ vertices, such that every $S\subseteq V(G)$ with $|S|\le k$ satisfies $\partial_G(S)\ge d|S|$. Let $\epsilon>0$ be a small constant and let $p=\frac{1+\epsilon}{d}$. Then, with probability tending to $1$ as $k$ tends to infinity, $G_p$ contains a component of order at least $\frac{k}{2}$. \label{i: up to k}
\end{theorem}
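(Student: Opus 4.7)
The plan is to perform a DFS exploration of $G_p$ using the standard coupling: pre-draw an iid sequence $\eta_1,\eta_2,\ldots$ of $\mathrm{Bern}(p)$ random variables and let the DFS read these in order as its query outcomes. Starting from an arbitrary vertex $v\in V(G)$, I would run the DFS and maintain the usual three sets $S$ (popped), $U$ (stack), $T$ (unvisited). Stop the DFS as soon as either $|S\cup U|=\lceil k/2\rceil$ --- in which case $|C(v)|\ge k/2$ and the theorem holds --- or $v$'s component $C(v)$ in $G_p$ is fully explored with $|C(v)|=m<k/2$.

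In the latter case, since $m\le k$, the hypothesis yields $\partial_G(C(v))\ge dm$. Every edge of $\partial_G(C(v))$ is queried and rejected during the DFS, so the DFS performs at least $N\ge dm$ queries, with exactly $m-1$ successes (one per push inside $v$'s component). A Chernoff bound applied to $\mathrm{Bin}(dm,p)$, whose mean is $m(1+\epsilon)$, then yields
\[
P\bigl(|C(v)|=m\bigr)\;\le\;P\bigl(\mathrm{Bin}(dm,p)\le m-1\bigr)\;\le\;\exp\!\left(-\frac{\epsilon^2 m}{2(1+\epsilon)}\right).
\]
Summing over $m\in[m_0,k/2)$ with any threshold $m_0=\omega(1)$ gives $P(|C(v)|\in[m_0,k/2))=o(1)$ as $k\to\infty$.

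To upgrade this per-vertex estimate into the desired whp existence statement, I would use a two-round sprinkling argument: write $p=p_1+p_2-p_1p_2$ with $p_1=(1+\epsilon/2)/d$, so that $G_p=G_{p_1}\cup G_{p_2}$ with independent rounds. In the first round (parameter $p_1$), the DFS--Chernoff bound above (with $\epsilon$ replaced by $\epsilon/2$) shows that whp a linear fraction of vertices in $G_{p_1}$ lie in ``medium'' components of size at least $m_0$; if no such component already has size $\ge k/2$, then there are many distinct medium components. In the second round (parameter $p_2\asymp\epsilon/(2d)$), exploit the hypothesis: each medium component $C$ has $\partial_G(C)\ge d|C|\ge dm_0$, providing many $G$-edges from $C$ to other medium components, so that the sprinkled $p_2$-edges whp fuse a collection of medium components into one of size $\ge k/2$.

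The main obstacle is this merging step: one needs to quantify the number of $G$-edges between distinct medium components of $G_{p_1}$ using the local expansion hypothesis, and then apply a Chernoff-type bound to the $p_2$-edge exposure to ensure whp merging into a single component of size at least $k/2$. The correctness of the first-round analysis relies on the observation that $G_{p_1}$ inherits the same local expansion hypothesis (the graph $G$ is fixed), so the DFS--Chernoff estimate is directly applicable with $\epsilon/2$ in place of $\epsilon$.
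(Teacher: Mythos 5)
There is a genuine gap, and it sits exactly where you flag ``the main obstacle.'' Your single-vertex DFS--Chernoff estimate is correct: $\mathbb{P}\left[|C(v)|=m\right]\le \mathbb{P}\left[\mathrm{Bin}(dm,p)\le m-1\right]=e^{-\Omega(\epsilon^2 m)}$ for $m<k/2$, so no vertex is likely to lie in a component of \emph{intermediate} size. But this estimate gives no lower bound on component sizes, so your claim that ``whp a linear fraction of vertices in $G_{p_1}$ lie in medium components of size at least $m_0$'' does not follow from it; establishing it would require a separate supercritical branching-process argument, which is delicate here because the hypothesis only controls the minimum degree pointwise and $d$ need not tend to infinity. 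More seriously, the merging step is not just unquantified but genuinely problematic: the hypothesis gives $\partial_G(C)\ge d|C|$ for a medium component $C$, but those boundary edges may all lead to vertices in tiny components or to each other's small neighbourhoods, and nothing forces a positive fraction of them to join two \emph{medium} components. As written, the sprinkling round has no usable supply of edges to expose.

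The paper's proof closes exactly this gap with one idea you are missing: apply the expansion hypothesis to the \emph{union} of all explored vertices rather than to individual components. Run a single global BFS (restarting in a fresh vertex whenever the queue empties) and look at the window where the set $W$ of fully explored vertices has size $k'\in[k/2,k]$. If the queue is empty at such a time $t$, then $W(t)$ is a union of complete components with $|W(t)|=k'\le k$, so \emph{all} $\partial_G(W(t))\ge dk'$ boundary edges have been queried and rejected, forcing $t\ge dk'$; yet only $|W(t)|=k'$ of the first $t$ queries succeeded, which whp contradicts $\sum_{j=1}^{dk'}X_j>(1+\epsilon/2)k'$ by Chernoff (with a union bound over the $O(dk)$ possible values of $k'd$). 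Hence whp the queue never empties in this window, so all of $W(t_1)\setminus W(t_0)$ lies in one component of order at least $k/2$ --- no sprinkling, no merging, and no lower bound on the number of medium components is ever needed. If you want to salvage your outline, this union trick is the missing ingredient; without it the two-round plan does not go through.
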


At first sight, the assumptions of Theorem \ref{th: general statement} seem quite dissimilar to other statements about the existence of large clusters in percolated subgraphs. However, for sets $S$ of size one, the condition is equivalent to the condition on the minimum degree of the host graph, $\delta(G) \geq d$, and for such graphs it is already known (see, e.g., \cite{KS13}) that when $p \geq \frac{1+\epsilon}{d}$ \textbf{whp} $G_p$ contains a component of order $\Omega(d)$. In particular, the main interest here is when $k \gg d$. In some sense, we can think of Theorem \ref{th: general statement} as giving an alternative heuristic for the nature of the phase transition --- here our point of criticality is controlled by the expansion ratio of subsets, whereas the quantitative aspects of the component structure above the critical point are controlled by the \emph{scale} on which this level of expansion holds. Indeed, in the complete graph, sets of order $k$ have edge-expansion of $\left(1-\frac{k}{n}\right)n\cdot k$. Thus, choosing $k=\delta n$, Theorem \ref{th: general statement} shows that when $p\cdot n>\frac{1}{1-\delta}$, \textbf{whp} $G(n,p)$ contains a linear sized component. Choosing $\delta$ sufficiently small, we have that whenever $p \cdot n >1$, \textbf{whp} $G(n,p)$ contains a linear sized component.

In this way, Theorem \ref{th: general statement} once again demonstrates the intrinsic connection between the expansion of the host graph, both global and `local', and the typical emergence of large components in the percolated subgraphs. Moreover, we prove a variant of Theorem \ref{th: general statement} which requires a slightly stronger assumption on the probability and on the degree of the graph, but allows one to verify instead only a `local' expansion property in the host graph (see Theorem \ref{th: general statement b}, stated in Section \ref{s: general iso}).

The paper is structured as follows. In Section \ref{s: prel} we set our notation, collect several lemmas which we will use throughout the proofs, and describe the Breadth First Search (BFS) algorithm which we will also use. In Section \ref{s: general iso} we give short proofs of Theorems \ref{th: general statement} and \ref{th: general statement b}, utilising the BFS algorithm. The proof of Theorem \ref{th: weak expander}, given in Section \ref{s: main result}, is the most involved part of the paper, and therein lie several novel ideas. In Section \ref{s: construction} we give two constructions proving Theorems \ref{th: construction} and \ref{th: construction 2}. Finally, in Section \ref{s: discussion}, we discuss our results and avenues for future research.

\section{Preliminaries}\label{s: prel}
As mentioned in the introduction, given a graph $H$ and a subset $S\subseteq V(H)$, we denote by $\partial_H(S)$ the number of edges with one endpoint in $S$ and the other endpoint in $S^C=V(H)\setminus S$. We denote by $N_H(S)$ the external neighbourhood of $S$ in $H$. Given two disjoint subsets $A,B\subseteq V(H)$, we denote by $e_H(A,B)$ the number of edges (in $H$) with one endpoint in $A$ and the other endpoint in $B$. We denote by $e_H(A)$ the number of edges in $H[A]$. Given $v\in V(H)$, we denote by $C_H(v)$ the connected component in $H$ to which $v$ belongs. In each case, if the underlying graph is clear from the context, we may omit the subscript. Furthermore, throughout the paper, we omit rounding signs for the sake of clarity of presentation.

If $y(\epsilon)$ is defined as in \eqref{survival prob}, then we note that $y(\epsilon)$ is the survival probability of a Galton-Watson tree with offspring distribution Po$(1+\epsilon)$, and it can be shown that $y(\epsilon)$ is an increasing continuous function on $(0,\infty)$ with $y(\epsilon) = 2\epsilon - O(\epsilon^2)$. 

Given an $n$-vertex graph $H$ and a subset $I\subseteq [n]$, the $I$\textit{-restricted} isoperimetric constant of $H$ is given by
\begin{align*}
    i_I(H)\coloneqq \min_{S\subseteq V(H), |S|\in I}\left\{\frac{\partial_H(S)}{|S|}\right\}.
\end{align*}
When $I=\{k\}$, we may abbreviate $i_{\{k\}}(H)$ into $i_k(H)$.\footnote{We note that in some papers $i_k$ is used to refer to $i_{[k]}$} For example, in Theorems \ref{th: weak expander} and \ref{th: general statement}, our assumptions correspond to a lower bound on the restricted isoperimetric constants $i_{[\frac{n}{2}]}(G)$ and $i_{[k]}(G)$, respectively.

We will make use of two standard probabilistic bounds. The first one is a typical Chernoff-type tail bound on the binomial distribution (see, for example, Appendix A in \cite{AS16}).
\begin{lemma}\label{l:  chernoff}
Let $n\in \mathbb{N}$, let $p\in [0,1]$, and let $X\sim Bin(n,p)$. Then for any $0<t\le \frac{np}{2}$, 
\begin{align*}
    &\mathbb{P}\left[|X-np|\ge t\right]\le 2\exp\left\{-\frac{t^2}{3np}\right\}.
\end{align*}
\end{lemma}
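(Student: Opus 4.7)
The plan is to deduce this from the standard exponential moment (Chernoff) method, with the final estimate optimised in the range $t \le np/2$. Write $X = \sum_{i=1}^n X_i$ with $X_i$ independent Bernoulli$(p)$ variables, and for an arbitrary $\lambda > 0$ apply Markov's inequality to $e^{\lambda X}$:
\begin{align*}
\mathbb{P}[X \ge np + t] \;\le\; e^{-\lambda(np+t)}\,\mathbb{E}[e^{\lambda X}] \;=\; e^{-\lambda(np+t)}(1-p+p e^{\lambda})^n.
\end{align*}
Using the elementary inequality $1-p+pe^{\lambda} \le \exp\bigl(p(e^{\lambda}-1)\bigr)$ gives
\begin{align*}
\mathbb{P}[X \ge np + t] \;\le\; \exp\!\left( np(e^{\lambda}-1) - \lambda(np+t)\right).
\end{align*}

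Next, I would optimise by choosing $\lambda = \log(1 + t/(np))$, which reduces the exponent to $-np\cdot h(t/(np))$ where $h(x) \coloneqq (1+x)\log(1+x) - x$. The lower-tail bound for $\mathbb{P}[X \le np - t]$ is obtained by the analogous argument applied with $\lambda < 0$, giving an exponent of $-np \cdot h(-t/(np))$. To get a single clean bound I would use the standard convexity estimate $h(x) \ge \tfrac{x^2}{2+ 2x/3}$, and then restrict to $t \le np/2$, i.e. $|x| \le 1/2$, so that the denominator is at most $3$ and hence $h(x) \ge x^2/3$ on both sides.

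Combining the two tail estimates through a union bound then yields
\begin{align*}
\mathbb{P}\!\left[|X - np| \ge t\right] \;\le\; 2\exp\!\left(-\frac{t^2}{3np}\right),
\end{align*}
as required. The only nontrivial step is calibrating the constants: one must verify that the inequality $h(x) \ge x^2/3$ is valid throughout $|x| \le 1/2$, which is a short one-variable convexity check (or can be read off standard references such as Appendix A of \cite{AS16}). Since the bound is a well-known tool and the derivation is entirely routine once the range restriction $t \le np/2$ is imposed, there is no substantive obstacle; the presentation in the paper merely cites the result.
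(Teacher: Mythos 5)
Your derivation is correct: it is the standard exponential-moment (Chernoff--Bennett) argument, with the exponent $-np\,h(\pm t/(np))$ for the two tails and the calibration $h(x)\ge x^2/3$ on $|x|\le 1/2$ (which indeed holds, since $h(x)\ge \frac{x^2}{2(1+x/3)}$ and the denominator is at most $7/3<3$ in that range). The paper itself does not prove this lemma at all --- it is quoted as a standard tail bound with a citation to Appendix~A of \cite{AS16} --- so there is nothing to compare against beyond noting that your write-up is exactly the textbook proof the citation points to, and it is complete modulo the one-variable convexity check you correctly flag as routine.
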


The second one is a variant of the well-known Azuma-Hoeffding inequality (see, for example, Chapter 7 in \cite{AS16}),
\begin{lemma}\label{l: azuma}
Let $m\in \mathbb{N}$ and let $p\in [0,1]$. Let $X = (X_1,X_2,\ldots, X_m)$ be a random vector with range $\Lambda = \{0,1\}^m$ with $X_{\ell}$ distributed according to Bernoulli$(p)$. Let $f:\Lambda\to\mathbb{R}$ be such that there exists $C \in \mathbb{R}$ such that for every $x,x' \in \Lambda$ which differ only in one coordinate,
\begin{align*}
    |f(x)-f(x')|\le C.
\end{align*}
Then, for every $t\ge 0$,
\begin{align*}
    \mathbb{P}\left[\big|f(X)-\mathbb{E}\left[f(X)\right]\big|\ge t\right]\le 2\exp\left\{-\frac{t^2}{2mpC^2}\right\}.
\end{align*}
\end{lemma}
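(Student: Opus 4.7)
The plan is to build on a standard Doob martingale construction but replace the generic Hoeffding-type MGF bound (which would only yield the ordinary Azuma inequality with denominator $2mC^2$) with a Bennett-type bound that is sensitive to the variance of Bernoulli$(p)$ increments. The factor of $p$ in the stated denominator must originate from the variance bound $p(1-p)\leq p$ of the individual coordinates, so the heart of the proof is to propagate this variance through the martingale differences.

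First, I would set up the Doob martingale $Y_i = \mathbb{E}[f(X)\mid X_1,\ldots,X_i]$ for $0\le i\le m$, with telescoping increments $D_i := Y_i - Y_{i-1}$, so that $\sum_{i=1}^m D_i = f(X) - \mathbb{E}[f(X)]$. Conditional on $\mathcal{F}_{i-1} = \sigma(X_1,\ldots,X_{i-1})$, let $g(b) = \mathbb{E}[f(X)\mid \mathcal{F}_{i-1}, X_i = b]$ for $b\in\{0,1\}$; the bounded-differences hypothesis propagates through the conditional expectation to give $|\Delta|\le C$, where $\Delta := g(1)-g(0)$. A direct computation then identifies $D_i = \Delta(X_i - p)$, a two-point mean-zero random variable taking the value $(1-p)\Delta$ with probability $p$ and $-p\Delta$ with probability $1-p$, so that $|D_i|\le C$ and, crucially, $\operatorname{Var}(D_i\mid \mathcal{F}_{i-1}) = p(1-p)\Delta^2 \le pC^2$.

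The second step is the conditional moment generating function estimate. Instead of Hoeffding's lemma, I would use a Bennett-style inequality, obtaining
\[
\mathbb{E}\!\left[e^{\lambda D_i}\,\middle|\, \mathcal{F}_{i-1}\right] \;\le\; \exp\!\left(\frac{p(1-p)\Delta^2}{C^2}\bigl(e^{\lambda C}-1-\lambda C\bigr)\right) \;\le\; \exp\!\left(\tfrac{1}{2}\,p\lambda^2 C^2\right)
\]
for $\lambda$ in a sufficiently small range (say $|\lambda|C\le 1$), where the last step uses $e^u-1-u\le u^2$ on $|u|\le 1$ together with $p(1-p)\le p$. Iterating via the tower property gives $\mathbb{E}[e^{\lambda(f(X)-\mathbb{E} f(X))}] \le \exp\bigl(\tfrac{1}{2}mp\lambda^2 C^2\bigr)$, and Chernoff's inequality optimised at $\lambda = t/(mpC^2)$ yields the one-sided bound. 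Applying the same argument to $-f$ and union-bounding produces the stated two-sided inequality with the factor $2$.

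The main technical delicacy lies in securing the variance-sensitive MGF bound: the naive Hoeffding bound $e^{\lambda^2 C^2/8}$ is agnostic to which coordinate is $0$ or $1$ and only reproduces the weaker Azuma denominator $2mC^2$. One must exploit the asymmetric range $[-p\Delta,(1-p)\Delta]$ explicitly, for example through the direct two-point factorisation $pe^{\lambda(1-p)\Delta}+(1-p)e^{-\lambda p\Delta} = e^{-\lambda p\Delta}\bigl(1+p(e^{\lambda \Delta}-1)\bigr)$, combined with $\log(1+x)\le x$ to reduce to controlling $p(e^{\lambda\Delta}-1-\lambda\Delta)$. A secondary point is ensuring that the optimal Chernoff parameter $\lambda^\ast = t/(mpC^2)$ lies in the regime where the tight MGF bound is valid, namely $t\lesssim mpC$; for $t$ beyond that range the stated subgaussian tail is already dominated by the subexponential tail supplied by the same Bennett bound, so only routine bookkeeping is required to extend the conclusion to all $t\ge 0$.
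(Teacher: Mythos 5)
The paper does not actually prove this lemma --- it is quoted as a standard variant with a pointer to \cite{AS16} --- so there is no in-paper argument to compare against; your route (Doob martingale plus a variance-sensitive, Bennett-type MGF bound) is the standard way such statements are derived, and your setup is correct: $D_i=(X_i-p)\Delta$ with $|\Delta|\le C$, conditional variance at most $p(1-p)C^2$, and the Bennett bound $\mathbb{E}[e^{\lambda D_i}\mid\mathcal{F}_{i-1}]\le\exp\{p(1-p)\Delta^2(e^{\lambda C}-1-\lambda C)/C^2\}$. However, the quantitative step fails: from $e^u-1-u\le u^2$ on $[0,1]$ you only get $\mathbb{E}[e^{\lambda D_i}\mid\mathcal{F}_{i-1}]\le\exp\{p\lambda^2C^2\}$, not $\exp\{\tfrac12 p\lambda^2C^2\}$; the inequality $e^u-1-u\le u^2/2$ that your claimed bound would require is false for every $u>0$, and indeed the one-step bound $\exp\{\tfrac12 p\lambda^2C^2\}$ is itself false (take $\Delta=C$, $\lambda C=1$ and $p$ small: $pe^{(1-p)}+(1-p)e^{-p}\approx 1+(e-2)p>e^{p/2}$). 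So the optimisation you describe yields $2\exp\{-t^2/(4mpC^2)\}$, not the stated constant $2mpC^2$ in the denominator.

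The more serious gap is the closing claim about $t\gtrsim mpC$: the domination goes the wrong way. In that regime the Bennett/Bernstein bound supplies only a subexponential tail $\exp\{-\Omega(t/C)\}$, which is \emph{larger} than the claimed Gaussian bound, so it cannot be used to deduce it, and no ``routine bookkeeping'' extends the conclusion to all $t\ge0$. In fact the purely Gaussian form fails there: with $f(x)=\sum_i x_i$, $C=1$ and $p$ small, $f(X)\sim \mathrm{Bin}(m,p)$ has a Poisson-type upper tail, and at $t=mp$ one has $\mathbb{P}[f(X)-mp\ge t]=\exp\{-(2\ln 2-1+o(1))mp\}$, which exceeds $2\exp\{-t^2/(2mp)\}=2e^{-mp/2}$ once $mp$ is large (and for $m=1$, $t$ close to $1$, the failure is even more blatant). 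The honest output of your argument is therefore a Bernstein-type bound of the form $\exp\{-t^2/(2(mpC^2+Ct/3))\}$, equivalently the stated Gaussian bound with an adjusted constant valid for $t=O(mpC)$. That restricted version is all the paper actually needs --- in both places the lemma is invoked (concentration of $|V_S(G(j))|$ and of $X$ in Theorem \ref{th: construction 2}), $t$ is far below $mpC$ and the exponent has ample slack --- but it is not the statement as literally written, and your proposed treatment of the large-$t$ range does not close that gap.
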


We also require the following bound on the number of $k$-vertex trees in a graph
$G$, which follows immediately from \cite[Lemma 2]{BFM98}.
\begin{lemma}\label{l: trees}
Let $k \in \mathbb{N}$ and let $t_k(G)$ be the number of trees on $k$ vertices which are subgraphs of an $n$-vertex graph $G$. Let $d\coloneqq \Delta(G)$ be the maximum degree of $G$. Then
\begin{align*}
     t_k(G)\le n\frac{k^{k-2}d^{k-1}}{k!}\le n(ed)^{k-1}.
\end{align*}
\end{lemma}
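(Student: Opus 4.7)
I would prove this by a standard double-counting argument that couples Cayley's formula with a greedy embedding bound. Let $\mathcal{P}$ denote the set of pairs $(T,\phi)$ in which $T$ is a tree on the labeled vertex set $[k]$ and $\phi:[k]\to V(G)$ is an injection with $\phi(i)\phi(j)\in E(G)$ for every $ij\in E(T)$. Every $k$-vertex tree subgraph of $G$ arises from $(T,\phi)\in\mathcal{P}$ in exactly $k!$ ways --- one for each bijection from $[k]$ to its vertex set --- so $t_k(G)\cdot k!=|\mathcal{P}|$, and it suffices to upper bound $|\mathcal{P}|$.

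Next I would bound $|\mathcal{P}|$ by fixing $T$ and estimating the number of valid maps $\phi$. Cayley's formula guarantees that there are exactly $k^{k-2}$ labeled trees on $[k]$. For each such $T$, I would root it at vertex $1$ and process the remaining vertices in a breadth-first order from the root: the image $\phi(1)$ can be any of the $n$ vertices of $G$, and for every subsequent vertex $v$, its parent $u$ in $T$ has already been placed, so the edge-preservation constraint forces $\phi(v)\in N_G(\phi(u))$, giving at most $d=\Delta(G)$ choices. Relaxing the injectivity requirement only inflates the count, so this yields at most $n\cdot d^{k-1}$ maps per labeled tree. Summing gives
\begin{align*}
t_k(G)\cdot k!\;=\;|\mathcal{P}|\;\le\;k^{k-2}\cdot n\cdot d^{k-1},
\end{align*}
which rearranges to the first inequality of the lemma.

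Finally, to obtain the cleaner bound $n(ed)^{k-1}$, I would verify the elementary estimate $k^{k-2}/k!\le e^{k-1}$ for all $k\ge 1$. The cases $k=1,2$ are immediate, and for $k\ge 3$ the Stirling bound $k!\ge \sqrt{2\pi k}\,(k/e)^k$ yields $k^{k-2}/k!\le e^k/(k^2\sqrt{2\pi k})\le e^{k-1}$. There is no serious obstacle in the argument; the only mild subtleties are keeping the overcounting factor $k!$ correct and noting that dropping the injectivity constraint in the BFS embedding step preserves a valid \emph{upper} bound, which is precisely what is needed.
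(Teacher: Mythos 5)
Your proof is correct: the overcounting factor is exactly $k!$ (for a fixed tree subgraph $S$ and each bijection $\phi:[k]\to V(S)$ there is a unique labelled tree $T=\phi^{-1}(S)$ with $\phi(T)=S$), the rooted breadth-first embedding gives at most $n\,d^{k-1}$ edge-preserving maps per labelled tree since dropping injectivity can only increase the count, and combining with Cayley's formula yields $t_k(G)\le n\,k^{k-2}d^{k-1}/k!$; the passage to $n(ed)^{k-1}$ via $k!\ge (k/e)^k$ is also fine. The paper itself does not prove this lemma but simply cites Lemma 2 of \cite{BFM98}, so there is no in-paper argument to diverge from; what you have written is the standard self-contained derivation (Cayley's formula plus a greedy/BFS embedding bound) underlying that reference, and it is a perfectly adequate replacement for the citation.
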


Finally, we will utilise the following lemma, allowing one to find large matchings in percolated subgraphs, which follows immediately from \cite[Lemma 3.8]{DEKK24}.
\begin{lemma}\label{l: matching}
Let $G$ be a $d$-regular graph. Let $c_1>0$ and $0<\delta<\frac{1}{2}$ be constants. Let $s\ge c_1d$. Let $F\subseteq E(G)$ be such that $|F|\ge s$, and let $q=\frac{\delta}{d}$. Then, there exists a constant $c_2=c_2(\delta)\ge\delta^2$ such that $F_{q}$, a random subset of $F$ obtained by retaining each edge independently with probability $q$, contains a matching of size at least $\frac{c_2s}{d}$ with probability at least $1-\exp\left\{-\frac{c_2s}{d}\right\}$.
\end{lemma}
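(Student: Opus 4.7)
My plan is to lower bound the size of a maximum matching in $F_q$ via an isolated-edges argument combined with a martingale concentration.

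For each edge $e\in F$, let $Z_e$ be the indicator of the event that $e\in F_q$ and no edge of $F$ adjacent to $e$ lies in $F_q$; the set $\{e:Z_e=1\}$ is automatically a matching in $F_q$, so any maximum matching in $F_q$ has size at least $Z:=\sum_{e\in F}Z_e$. Since $G$ is $d$-regular and $F\subseteq E(G)$, each $e\in F$ has at most $2(d-1)$ adjacent edges in $F$. Hence $\mathbb{E}[Z_e]\geq q(1-q)^{2d-2}\geq (\delta/d)e^{-2\delta}\geq \delta/(ed)$, using $q=\delta/d$ and $\delta<1/2$. Summing and using $|F|\geq s$ yields $\mathbb{E}[Z]\geq \delta s/(ed)$, which comfortably exceeds the target $\delta^2 s/d$ when $\delta$ is bounded away from $1/e$.

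For concentration, I would first reduce to the case $|F|=s$ by passing to any subset $F'\subseteq F$ with $|F'|=s$; a matching in $F'_q$ is also a matching in $F_q$. Let $m^*$ denote the size of a maximum matching in $F'_q$; it is a $1$-Lipschitz function of the $s$ edge-indicator Bernoullis (flipping one edge changes $m^*$ by at most $1$). Azuma (Lemma~\ref{l: azuma}) with $C=1$, $m=s$, $p=q=\delta/d$, and deviation $t=\mathbb{E}[m^*]-\delta^2 s/d = \Omega(\delta s/d)$ then gives
\[
    \Pr[m^* < \delta^2 s/d] \leq 2\exp\Bigl\{-\tfrac{t^2 d}{2s\delta}\Bigr\} \leq \exp\{-c_2 s/d\}
\]
for an appropriate constant $c_2=c_2(\delta)\geq\delta^2$.

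The main obstacle will be extending the argument to the full range $\delta\in(0,1/2)$: for $\delta$ close to $1/2$, the isolated-edges lower bound $\delta s/(ed)$ only barely exceeds $\delta^2 s/d$, leaving little margin for the concentration loss. To close this gap one would either tighten the expectation estimate --- for instance by analyzing a random greedy matching in $F_q$, or by arguing about ``near-isolated'' edges whose only conflicts are with other near-isolated edges --- or perform the isolated-edges count within a Vizing decomposition of $F$ into at most $d+1$ matchings and sum the contributions over the colour classes. Any of these amplifications, combined with the Azuma step, yields the claimed matching of size at least $c_2 s/d$ in $F_q$ with probability at least $1-\exp\{-c_2 s/d\}$.
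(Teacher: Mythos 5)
The paper does not actually prove this lemma: it is imported wholesale from \cite[Lemma 3.8]{DEKK24}, so there is no internal proof to compare against. Your skeleton --- isolated edges of $F_q$ form a matching, the maximum matching size is a $1$-Lipschitz function of the edge indicators, apply Lemma~\ref{l: azuma} --- is sound and does yield a matching of size $\Omega_{\delta}(s/d)$ with failure probability $\exp\{-\Omega_{\delta}(s/d)\}$, which is all the paper ever needs (in the proof of Theorem~\ref{th: weak expander} any $c_2=\mathrm{poly}(\delta)$ would do after adjusting $C_0(\epsilon)$).

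As a proof of the lemma \emph{as stated}, however, there is a genuine quantitative gap, and your description of the obstacle is not accurate. First, for $\delta>1/e$ the bound $\mathbb{E}[Z]\ge \delta s/(ed)$ lies \emph{below} the target $\delta^2 s/d$, not ``barely above'' it, so the expectation step already fails on part of the range $(0,\tfrac12)$. Second, even where it succeeds, the Azuma step consumes most of the margin: to force the failure probability down to $\exp\{-c_2 s/d\}$ with $c_2\ge\delta^2$ you need a deviation $t\ge\sqrt{2\delta c_2}\,s/d\ge \delta\sqrt{2\delta}\,s/d$, hence $\delta/e\ge \delta^2+\delta\sqrt{2\delta}$, i.e.\ $1/e\ge\delta+\sqrt{2\delta}$, which already fails for $\delta\gtrsim 0.05$; the proposed ``amplifications'' that would fix this are only named, not carried out. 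Moreover, no amplification can cover the whole stated range: the matching is at most $|E(F_q)|\sim \mathrm{Bin}(|F|,\delta/d)$, and taking $|F|=s$ and $\mu=\delta s/d$, the event $|E(F_q)|<\delta^2 s/d=\delta\mu$ has probability of order $\exp\{-(\delta\ln\delta-\delta+1)\mu\}$, which exceeds the permitted $\exp\{-\delta\mu\}$ once $\delta\ln\delta+1<2\delta$, i.e.\ for $\delta\gtrsim 0.32$; so the constant $c_2\ge\delta^2$ is unattainable near $\delta=\tfrac12$ and the lemma should be read (and is only applied) with $\delta$ sufficiently small. In that regime your argument is repairable: replace the isolated-edge count by $|E(F_q)|$ minus the number of adjacent pairs of edges in $F_q$ (deleting one edge from each such pair leaves a matching), whose expectation is at least $\delta|F|/d-\delta^2|F|/d\ge\delta(1-\delta)s/d$; this leaves a margin of $\delta(1-2\delta)s/d$ over $\delta^2 s/d$ that survives the Azuma step for all sufficiently small $\delta$.
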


\subsection{The Breadth First Search algorithm}\label{s: BFS}
The Breadth First Search (BFS) algorithm is an algorithm which explores the components of a graph $G$ by building a maximal spanning forest. 

The algorithm receives as input a graph $G=(V,E)$ and an order $\sigma$ on $V$. The algorithm maintains three sets of vertices: 
\begin{itemize}
\item $W,$ the set of vertices whose exploration is complete; 
\item $Q,$ the set of vertices currently being explored, kept in a queue; and
\item $U,$ the set of vertices that have not been explored yet.
\end{itemize}
The algorithm starts with $W=Q=\emptyset$ and $U=V(G)$, and ends when $Q\cup U=\emptyset$. At each step, if $Q$ is non-empty, then the algorithm queries $U$ for neighbours in $G$ of the first vertex $v$ in $Q$, according to $\sigma$. Each neighbour which is discovered is added to the back of the queue $Q$. Once all neighbours of $v$ have been discovered, we move $v$ from $Q$ to $W$. If $Q=\emptyset$, then we move the next vertex from $U$ (according to $\sigma$) into $Q$. Note that the set of edges discovered during the algorithm forms a maximal spanning forest of $G$. In particular, we begin exploring a component $K$ of $G$ when the first vertex of $K$ (according to $\sigma$) enters $Q$, which was empty at that moment, and we complete exploring the component the first moment $Q$ becomes empty again.

In order to analyse the BFS algorithm on a random subgraph $G_p$ of a graph $G$ with $|E(G)|$ edges, we will utilise the \emph{principle of deferred decisions}. That is, we will take a sequence $(X_j \colon 1 \leq j \leq |E(G)|)$ of i.i.d. Bernoulli$(p)$ random variables, which we will think of as representing a positive or negative answer to a query in the algorithm. When the $j$-th edge of $G$ is queried during the BFS algorithm, we will include it in $G_p$ if and only if $X_j=1$. Note that the forest obtained in this way has the same distribution as a forest obtained by running the BFS algorithm on $G_p$. We say the $j$-th edge of $G$ is queried at \textit{time} $j$ in the process, and write $W(j), Q(j)$ and $U(j)$ for the sets $W, Q$ and $U$ directly after this edge has been queried.

\section{Large components --- assuming restricted isoperimetry}\label{s: general iso}
We begin with the proof of Theorem \ref{th: general statement}. The proof is short, drawing inspiration from \cite{KS13} and showcases how one can utilise the properties of the BFS algorithm in the setting of percolation.
\begin{proof}[Proof of Theorem \ref{th: general statement}] 
We run the BFS algorithm described in Section \ref{s: BFS} on $G_p$. Let $t_0$ be the first time that $|W(t_0)|=\frac{k}{2}$, and let $t_1$ be the first time when $|W(t_1)|=k$.

Suppose towards a contradiction that $Q$ is empty at some time $t\in[t_0, t_1]$, and let $k'\in\left[\frac{k}{2},k\right]$ be such that $|W(t)|=k'$. By our assumption on the isoperimetric properties of $G$ we have $\partial(W(t))\ge d\cdot k'$, and since each edge between $W(t)$ and $W(t)^C$ has been queried at time $t$, we have that $t\ge k'd$. 

On the other hand, at any time $t$, we have that $\sum_{j=1}^tX_j\le |W(t)\cup Q(t)|$, and in particular if $Q(t)$ is empty, then $|W(t)|\ge \sum_{j=1}^t X_j\ge \sum_{j=1}^{k'd}X_j$.
By Lemma \ref{l: chernoff},
\begin{align*}
    \mathbb{P}\left[\sum_{j=1}^{k'd}X_j\le \left(1+\frac{\epsilon}{2}\right)k'\right]\le \exp\left\{-\frac{\epsilon^2{k'}^2}{4\cdot 3(1+\epsilon)k'}\right\}\le \exp\left\{-\frac{\epsilon^2k}{25}\right\}.
\end{align*}
Recalling that $k'\in \left[\frac{k}{2},k\right]$, by the union bound over the at most $kd$ possible values of $k'd$, we have that the probability that $\sum_{j=1}^{t}X_j\le \left(1+\frac{\epsilon}{2}\right)k'$ is at most
\begin{align*}
    d k \cdot \exp\left\{-\frac{\epsilon^2k}{25}\right\}\le k^2\cdot \exp\left\{-\frac{\epsilon^2k}{25}\right\}=o(1),
\end{align*}
where the inequality follows from our assumption that $k\ge d$, and the equality follows from our assumption that $k=\omega(1)$. Thus, \textbf{whp} at any time $t\in[t_0, t_1]$, we have that $|W(t)|\ge \sum_{j=1}^tX_j\ge\sum_{j=1}^{k'd}X_j> k'$ --- a contradiction.

Therefore, \textbf{whp} $Q(t)$ is not empty for any $t\in [t_0, t_1]$, and all the vertices in $W(t_1) \setminus W(t_0)$ belong to the same component. We thus conclude that \textbf{whp} there exists a component of size at least $\frac{k}{2}$ in $G_p$.
\end{proof}

We note that in the above argument, we can choose $t_0$ to be the first moment where $|W(t_0)|=\delta k$ for small $\delta$, and with a similar analysis deduce the existence of a component of order at least $(1-o(1))k$. We have chosen $\delta = \frac{1}{2}$ for clarity of presentation.

As mentioned in the introduction, under an additional assumption on the maximum degree of $G$ and with a slightly larger probability, we can make a similar conclusion as to the size of the largest component under a much weaker isoperimetric assumption, where we only bound the expansion of sets of size \emph{exactly} $k$.
\begin{thm}\label{th: general statement b}
Let $k=\omega(1)$, let $c_1, c_2\in (0,1]$, and let $d\le c_1(1-c_2)k$. Let $G$ be a graph on more than $k$ vertices, with maximum degree $d$, and with $i_k(G)\ge c_1d$. Let $\epsilon>0$ be sufficiently small, and let $p=\frac{1+\epsilon}{c_2\cdot c_1d}$. Then, \textbf{whp}, $G_p$ contains a component of order at least $\frac{c_1(1-c_2)k}{4}$.
\end{thm}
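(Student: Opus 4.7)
The plan is to run the BFS algorithm of Section \ref{s: BFS} on $G_p$ and compare the number of positive queries with the number of negative queries the algorithm is forced to make, as in the proof of Theorem \ref{th: general statement}. The new ingredient dictated by the weaker hypothesis $i_k(G) \ge c_1 d$ is that the isoperimetric bound can only be invoked at the first moment $t^*$ at which the BFS-explored set has size exactly $k$.

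Set $K := c_1(1-c_2)k/4$, let $E$ denote the event that some component of $G_p$ has order at least $K$, and let $t^*$ be the first time at which $|W(t^*) \cup Q(t^*)| = k$; this is well-defined since $|V(G)| > k$. Write $A^* := W(t^*) \cup Q(t^*)$. On the event $\neg E$, $Q(t^*)$ is contained in the currently-explored component, so $|Q(t^*)| < K$. Applying $i_k(G) \ge c_1 d$ to the $k$-element set $A^*$ gives $\partial_G(A^*) \ge c_1 dk$, and since $\Delta(G) \le d$ at most $d|Q(t^*)| < dK$ of these edges are incident to $Q(t^*)$. Hence at least
\[
c_1 dk - dK \;=\; \frac{(3+c_2)\,c_1 dk}{4} \;=:\; T
\]
boundary edges of $A^*$ run from $W(t^*)$ to $U(t^*)$. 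Each such edge was queried at the moment its $W$-endpoint was being explored (when its other endpoint was still in $U$) and necessarily received a negative answer, for otherwise that endpoint would have entered $Q$ and not be in $U(t^*)$. Therefore $t^* \ge T$ on $\neg E$.

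The standard BFS identity $\sum_{j=1}^{t^*} X_j \le |A^*| = k$, combined with monotonicity of the partial sums, then yields $\sum_{j=1}^T X_j \le k$ on the event $\neg E$. On the other hand,
\[
Tp \;=\; \frac{(3+c_2)(1+\epsilon)}{4 c_2}\,k,
\]
and the factor $(3+c_2)/(4c_2)$ is strictly greater than $1$ for every $c_2 \in (0,1)$; so for $\epsilon > 0$ sufficiently small we have $Tp - k = \Omega(k)$. Since $k = \omega(1)$, Lemma \ref{l: chernoff} gives
\[
\mathbb{P}\!\left[\sum_{j=1}^T X_j \le k\right] \;\le\; \exp\!\left(-\Omega(k)\right) \;=\; o(1),
\]
so $\mathbb{P}[\neg E] = o(1)$, which is the desired conclusion.

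The only substantive change from the argument for Theorem \ref{th: general statement} is that, because we can only apply isoperimetry at the scale $|A^*|=k$, a chunk of size up to $dK$ of the boundary $\partial_G(A^*)$ may sit against the queue and be still unqueried at time $t^*$. This costs a factor $(3+c_2)/4$ in the lower bound on $T$, which is precisely what is absorbed by the strengthened percolation probability $p = (1+\epsilon)/(c_2 c_1 d)$ and the reduced target size $K = c_1(1-c_2)k/4$; formally, the scheme succeeds exactly in the range where $(3+c_2)/(4c_2) > 1$, i.e., whenever $c_2 \in (0,1)$.
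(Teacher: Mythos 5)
Your argument is correct, and it takes a genuinely different route from the paper. The paper first establishes a purely deterministic ``local-to-global'' expansion lemma (Lemma \ref{l: local to global expansion}): by iteratively deleting subsets of size at most $c_3k=\frac{c_1(1-c_2)k}{2}$ whose expansion drops below $c_1c_2d$, it extracts a subgraph $G'$ on at least $|V(G)|-k$ vertices with $i_{[c_3k]}(G')\ge c_1c_2d$, and then invokes Theorem \ref{th: general statement} on $G'$ as a black box. You instead use the exact-scale hypothesis $i_k(G)\ge c_1d$ only once, at the stopping time when the BFS-explored set reaches size $k$, and absorb the possibly unqueried boundary edges meeting the queue (at most $dK$ of them, since $\Delta(G)\le d$ and, on the complement of the target event, $|Q(t^*)|<K$) into the slack created by the boosted probability $p=\frac{1+\epsilon}{c_1c_2d}$; the comparison $Tp>(1+\Omega(1))k$ for $c_2<1$ then closes the argument with a single Chernoff bound, with no union bound over times and, as far as I can see, without ever using the hypothesis $d\le c_1(1-c_2)k$. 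What the paper's route buys is a reusable deterministic lemma and the re-use of Theorem \ref{th: general statement}; what yours buys is brevity and a slightly leaner set of hypotheses. Two small points to tidy up: (i) $|W(j)\cup Q(j)|$ can increase by $2$ in a single time step (a positive query immediately after a new root is moved from $U$ into the empty queue), so you should define $t^*$ as the first time the explored set has size \emph{at least} $k$ and note that removing one vertex changes the boundary by at most $d$, which is negligible against $c_1dk$; (ii) for small $c_2$ the deviation $Tp-k$ exceeds $Tp/2$, so Lemma \ref{l: chernoff} cannot be applied with $t=Tp-k$ directly --- take $t=\min\{Tp-k,\,Tp/2\}$ instead, which still gives a failure probability of $\exp\{-\Omega(k)\}=o(1)$.
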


Before proving Theorem \ref{th: general statement b}, we first prove the following lemma, which allows us to translate a bound on $i_{k_1}(G)$ to one on $i_{[k_2]}(G)$, with $k_2$ being not much smaller than $k_1$. The proof draws on ideas from \cite{K19}. 

\begin{lemma}\label{l: local to global expansion}
Let $k,d$ be positive integers, let $c_1, c_2\in [0,1]$ be such that $c_1d<k$ and let us write $c_3 := \frac{c_1(1-c_2)}{2}$. Let $G$ be a graph on more than $k$ vertices, with maximum degree $d$, and such that $i_k(G)\ge c_1d$. Then there exists a subgraph $G' \subseteq G$ with $|V(G')| \geq |V(G)|-k$, such that $i_{[c_3 k]}(G')\ge c_1c_2d$.
\end{lemma}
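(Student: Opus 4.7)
The plan is to iteratively peel off \emph{bad} sets from $G$ until none remain. Call $S \subseteq V(H)$ bad in a subgraph $H$ if $1 \leq |S| \leq c_3 k$ and $\partial_H(S) < c_1 c_2 d |S|$. Starting from $G_0 := G$, at each step $j$ I would look for a bad set $S_j$ in $G_{j-1}$; if one exists, set $G_j := G_{j-1} - S_j$, and otherwise stop and output $G' := G_{j-1}$. By construction, $G'$ automatically satisfies $i_{[c_3 k]}(G') \geq c_1 c_2 d$ as soon as the procedure halts, so the task reduces to showing that the cumulative removed set $T_j := \bigcup_{i \leq j} S_i$ never reaches size $k$, since then $|V(G')| \geq |V(G)| - k$.

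The engine of the argument is a bookkeeping invariant on the number of edges of $G$ leaving $T_j$. Splitting $T_j = T_{j-1} \cup S_j$ and using $V(G_j) = V(G) \setminus T_j$, a direct edge-count gives
\begin{align*}
e_G\bigl(T_j, V(G) \setminus T_j\bigr) = e_G\bigl(T_{j-1}, V(G) \setminus T_{j-1}\bigr) + \partial_{G_{j-1}}(S_j) - e_G(T_{j-1}, S_j).
\end{align*}
Since $S_j$ is bad, $\partial_{G_{j-1}}(S_j) < c_1 c_2 d |S_j|$, and after discarding the non-negative term $e_G(T_{j-1}, S_j)$ an induction yields the clean bound $e_G(T_j, V(G) \setminus T_j) < c_1 c_2 d |T_j|$ at every step.

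Now suppose for contradiction that the process first reaches $|T_{j^*}| \geq k$ at some step $j^*$. Since $|T_{j^*-1}| \leq k - 1$ and $|S_{j^*}| \leq c_3 k$, we have $k \leq |T_{j^*}| < (1+c_3) k$, and I pick any $T^* \subseteq T_{j^*}$ of size exactly $k$. The isoperimetric hypothesis $i_k(G) \geq c_1 d$ gives $\partial_G(T^*) \geq c_1 d k$. On the other hand, splitting $\partial_G(T^*)$ according to whether each edge leaves $T_{j^*}$ altogether or lands in $T_{j^*} \setminus T^*$, and using $\Delta(G) \leq d$ to bound the latter contribution by $d|T_{j^*} \setminus T^*| < c_3 d k$, I obtain
\begin{align*}
\partial_G(T^*) \leq e_G\bigl(T_{j^*}, V(G) \setminus T_{j^*}\bigr) + d |T_{j^*} \setminus T^*| < c_1 c_2 d (1+c_3) k + c_3 d k.
\end{align*}

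The only real design choice is to pick $c_3$ so that $c_1 c_2 (1+c_3) + c_3 \leq c_1$, which after rearrangement reduces to the elementary inequality $c_2 + c_1 c_2 (1-c_2) \leq 1$ on $[0,1]^2$; the value $c_3 = c_1(1-c_2)/2$ given in the statement is precisely calibrated for this. With that choice the right-hand side above is strictly less than $c_1 d k$, contradicting the isoperimetric lower bound and forcing the process to terminate with $|T_{j^*}| < k$. There is no serious obstacle beyond setting up the peeling correctly and fixing the right $c_3$; the rest is elementary edge counting.
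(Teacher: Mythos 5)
Your proposal is correct and follows essentially the same route as the paper: the same iterative peeling of sets with deficient boundary, the same invariant that the cumulative removed set $T_j$ satisfies $e_G(T_j, V(G)\setminus T_j) < c_1c_2 d|T_j|$ (which the paper asserts ``by construction'' and you verify by induction), and the same contradiction obtained by comparing $i_k(G)\ge c_1d$ against an upper bound on the boundary of a size-$k$ subset of $T_{j^*}$, with the excess $|T_{j^*}\setminus T^*|\le c_3k$ handled via the maximum-degree bound exactly as the paper handles $e_G(B_1,B_2)$. The final arithmetic, $c_1c_2(1+c_3)+c_3\le c_1$ with $c_3=c_1(1-c_2)/2$, is the same calibration as the paper's $c_2+\tfrac{1-c_2^2}{2}\le 1$.
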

\begin{proof}
Initialise $G_0=G$ and $W_0=\emptyset$. At each iteration $j$, if there is a subset $B\subseteq V(G_{j-1})$ of size $|B|\le c_3 k$ with $\partial_{G_j}\left(B\right)<c_2c_1d|B|$, we update $W_j\coloneqq W_{j-1}\cup B$ and $G_j\coloneqq G_{j-1}[V(G_{j-1})\setminus B]$. We terminate this process once there are no more such subsets $B$, and let $G'$ be the resulting graph.

Suppose towards contradiction that there is some $m$ such that $|W_m|\ge k$, where we may assume without loss of generality that $m$ is minimal with this property. Let $B_0$ be the last subset added to $W_m$, so that $W_m = W_{m-1} \cup B_0$. Then, there is some $B_1\subseteq B_0$ such that $|W_{m-1} \cup B_1|=k$. Let us further set $B_2\coloneqq B_0\setminus B_1$. 

On the one hand, by our assumption on the isoperimetric inequality on $G$, that is, $i_k(G)\ge c_1d$, we have that \begin{equation}\label{e:lowerboundedges}
\partial_G\left(W_{m-1}\cup B_1\right)\ge c_1dk.
\end{equation}
On the other hand,
\begin{align*}
\partial_G\left(W_{m-1}\cup B_1\right) &\le \partial_G(W_{m-1})+e_G(B_1, V(G)\setminus\left(W_{m-1}\cup B_1)\right)\\
    &\le \partial_G(W_{m-1})+e_G\left(B_0, V(G)\setminus (W_{m-1}\cup B_0)\right)+e_G(B_1, B_2).
\end{align*}
Now, by construction, we have that $\partial_G(W_{m-1})< c_1 c_2 d |W_{m-1}| < c_2c_1dk$. Moreover, by our choice of $W_{m-1}$ and $B_0$, we have $e_G\left(B_0,V(G)\setminus (W_{m-1}\cup B_0)\right)=\partial_{G_{m-1}}(B_0)<c_1c_2d|B_0|$. Finally, since the graph $G$ has maximum degree $d$, we have that $e_G(B_1,B_2)<d|B_0|$. Recalling that $|B_0|\le c_3 k = \frac{c_1(1-c_2)k}{2}$ and $c_1 \in [0,1]$, we have that
\begin{align}
    \partial_G\left(W_{m-1}\cup B_1\right)&<c_1c_2dk+c_1c_2d\cdot \frac{c_1(1-c_2)k}{2}+d\cdot \frac{c_1(1-c_2)k}{2} \nonumber\\
    &\le c_1dk\left(c_2+\frac{(1+c_2)(1-c_2)}{2}\right)=c_1dk\left(c_2+\frac{1-c_2^2}{2}\right),\label{e:upperboundedges}
\end{align}
where in the last inequality we used our assumption that $c_1\le 1$. Observe that $f(c_2)=c_2+\frac{1-c_2^2}{2}$ is continuous, increasing on $(-\infty, 1]$, and attains the value of $1$ when $c_2=1$. Recalling that $c_2\in [0,1]$, it follows from \eqref{e:lowerboundedges} and \eqref{e:upperboundedges} that
\begin{align*}
    c_1dk\le \partial_G\left(W_{m-1}\cup B_1\right)<c_1dk\left(c_2+\frac{1-c_2^2}{2}\right)\le c_1dk,
\end{align*}
a contradiction.
\end{proof}

We can now show that Theorem \ref{th: general statement b} follows from Lemma \ref{l: local to global expansion} and Theorem \ref{th: general statement}.
\begin{proof}[Proof of Theorem \ref{th: general statement b}]
By Lemma \ref{l: local to global expansion}, there exists a non-empty subgraph $G'\subseteq G$ such that every $S\subseteq V(G')$ with $|S|\le \frac{c_1(1-c_2)k}{2}$ has $\partial_{G'}(S)\ge c_1c_2d|S|$. Applying Theorem \ref{th: general statement} to $G'$ with $p=\frac{1+\epsilon}{c_1c_2d}$, we conclude that \textbf{whp} $G'_p$ (and thus $G_p$) contains a component of size at least $\frac{c_1(1-c_2)k}{4}$.
\end{proof}

It would be interesting to see whether the maximum degree condition of Lemma \ref{l: local to global expansion}, and thus of Theorem \ref{th: general statement b}, can be replaced with an assumption on the \textit{average} degree of $G$.

\begin{remark}\label{r:long paths}
We note that one cannot hope to obtain a path of length $\Omega_{\epsilon}(k)$ under the assumptions of Theorem \ref{th: general statement}. For example, let $G=K_{d,d^{10}}$, and let $k=d^2$. We have that $i_{[k]}(G)\ge d$. All paths in $G$ have length at most $2d$, and thus, naturally, we cannot hope to find any path of length $\Omega_{\epsilon}(k)$ in a random subgraph of $G$.
\end{remark}

\section{Existence of a giant component under weak isoperimetric assumptions}\label{s: main result}
Let us start this section by defining some useful notation, and giving a broad outline of the proof of Theorem \ref{th: weak expander}. 

Our proof will proceed using a three-round exposure. Let $\delta\coloneqq \delta(\epsilon)>0$ be a sufficiently small constant. We let $p_2=p_3=\frac{\delta}{d}$, and let $p_1$ be such that $(1-p_1)(1-p_2)(1-p_3)=1-p$, so that $G_p$ has the same distribution as $G_{p_1}\cup G_{p_2}\cup G_{p_3}$. In order to easily describe the intermediary stages of the three-round exposure, we let $p'$ be such that $(1-p_1)(1-p_2)=(1-p')$, and define the following three graphs. 
\begin{itemize}
    \item $G(1)=G_{p_1}$, noting that $p_1\ge \frac{1+\epsilon-2\delta}{d}$;
    \item $G(2)=G(1)\cup G_{p_2}$, noting that $G(2) \sim G_{p'}$ and $p'\ge \frac{1+\epsilon-\delta}{d}$; and 
    \item  $G(3)=G(2)\cup G_{p_3}$, noting that $G(3) \sim G_p$.
\end{itemize}
Let us write $\rho_1=p_1, \rho_2=p'$ and $\rho_3=p$, so that $G(j) \sim G_{\rho_j}$ for each $j\in\{1,2,3\}$. Note that $V(G(j))=V(G)$ for each $j\in\{1,2,3\}$.
    
We now define several sets which will be crucial to our analysis in the section. Firstly, $V_{L}(G(j))$ is the set of vertices which lie in `large' components in $G(j)$. To be precise, 
\begin{align*}
    V_L(G(j))\coloneqq\left\{v\in V(G)\colon |C_{G(j)}(v)|\ge \delta d\right\}.
\end{align*}
Secondly, $W_{L}(G(j))$ is the set of vertices with many neighbours in $V_L(G(j))$. That is,
\begin{align*}
    W_{L}(G(j))\coloneqq \left\{w\in V(G)\colon |N_G(w)\cap V_{L}(G(j))|\ge \delta^5d\right\}.
\end{align*}
Finally, $V_{S}(G(j))$ is the set of vertices in `small' components in $G(j)$. That is,
\begin{align*}
    V_S(G(j))\coloneqq \left\{v\in V(G)\colon |C_{G(j)}(v)|\le \log^2d\right\}.
\end{align*}

Our strategy for proving Theorem \ref{th: weak expander} is then broadly as follows. The first step, which is relatively standard, is to show that \textbf{whp} the right asymptotic proportion of vertices lie in `large' components in each $G(j)$. To do this, we estimate the number of vertices which lies in `small' components and then show that a negligible proportion of the vertices lies in components of intermediate size.

The key part of the proof, which contains several novel arguments, is to show that \textbf{whp} almost all vertices have many neighbours which lie in `large' components in $G(2)$. Such a statement already appears in the seminal work of Ajtai, Koml\'os and Szemer\'edi \cite{AKS81}, and is a key part of the analysis of the phase transition in various geometric graphs \cite{AKS81,DEKK22}. However, the proofs in these settings rely heavily on the \textit{self-symmetry} of these graphs, and a key improvement here is to prove such a statement without any structural assumptions on the host graph, in fact relying only on its \textit{regularity}. This follows from a delicate argument analysing a modified BFS process starting at the neighbours of a fixed vertex. The rough idea here is that, before we have discovered $\Omega(d)$ neighbours which lie in large components, \textbf{whp} $o(d)$ vertices are contained in components which are not 'large', which will allow us to couple the exploration process from below with a supercritical branching process, and conclude that each neighbour has a constant probability of lying in a large component.

With these two results in hand, we can argue for the typical existence of a giant component, roughly as in \cite{AKS81,BKL92,DEKK22}. More precisely, we show that almost every vertex in $V_L(G(2))$ will coalesce into a single component in $G(3)$ as follows. We say that a partition $V_L(G(2))=A\cup B$ is component-respecting if $K\cap V_L(G(2))$ is fully contained in either $A$ or $B$, for every component $K$ of $G(2)$. Given a component-respecting partition $V_L(G(2))=A\cup B$, since almost every vertex is adjacent to many vertices in large components in $G(2)$, we can extend this partition to an almost partition $A' \cup B'$ of $V(G)$ such that $A \subseteq A'$, $B \subseteq B'$ and every vertex in $A', B'$ has many neighbours in $A, B$, respectively. Our assumption on the isoperimetric properties of $G$ ensures that there are many edges between $A'$ and $B'$, which we can extend to a large family of paths in $G$ between $A$ and $B$ of length at most $3$. Using Lemma \ref{l: matching}, which provides with very high probability a large matching between $A'$ and $B'$ in $G_{p_3}$, we can argue that with very high probability one of these paths is present in $G(3)$, and in fact that the failure probability is small enough that \textbf{whp} this holds for \emph{every} component-respecting partition of $V_L(G(2))$ of relevant sizes. Hence, \textbf{whp} $G(3)$ contains a component containing almost all the vertices in $V_L(G(2))$, which contains asymptotically the required number of vertices.

Let us begin then by estimating the order of $V_S(G(j))$ for each $j\in\{1,2,3\}$. Note that the number of vertices in small components is a decreasing property, and it thus suffices to upper bound the order of $V_S(G(1))$ and to lower bound the order of $V_S(G(3))$. To that end, let
\begin{align*}
    F(c)=\sum_{k=1}^{\infty}\frac{k^{k-1}}{k!}c^{k-1}\exp\left\{-ck\right\}.
\end{align*}
It is known (see for example \cite[p.~346]{ER60}) that given $c>1$, we have that 
\begin{equation}
    F(c)=1-y(c-1), \label{eq: ER crazy equality}
\end{equation}
where $y(x)$ is defined as in \eqref{survival prob}. With this in hand, we are ready to estimate $|V_S(G(j))|$, in a manner similar to the one used already in the seminal work of Erd\H{o}s and R\'enyi \cite{ER60}.
\begin{lemma}\label{l: |V_S(G(1))|}
We have that \textbf{whp}
\begin{align*}
    &|V_S(G(1))|\le \left(1+o_d(1)\right)\left(1-y(\epsilon-2\delta)\right)n,\\
    &|V_S(G(3))|\ge \left(1-o_d(1)\right)\left(1-y(\epsilon)\right)n.
\end{align*}
\end{lemma}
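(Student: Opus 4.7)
The plan is to establish both inequalities via a first-moment calculation followed by concentration through Lemma~\ref{l: azuma}, treating the two directions separately: the upper bound on $|V_S(G(1))|$ via a union bound over spanning trees, and the lower bound on $|V_S(G(3))|$ via a branching-process comparison.

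For the upper bound, I would fix $v \in V(G)$ and observe that any component of size exactly $k$ containing $v$ admits a spanning tree, so by the union bound
\begin{align*}
\mathbb{P}\bigl[|C_{G(1)}(v)| = k\bigr] \le \sum_{T \in \mathcal{T}_k(v)} p_1^{k-1}(1-p_1)^{\partial_G(V(T))},
\end{align*}
where $\mathcal{T}_k(v)$ denotes the family of $k$-vertex trees in $G$ containing $v$. For $k \le \log^2 d$, the trivial bound $e_G(V(T)) \le \binom{k}{2}$ gives $\partial_G(V(T)) \ge kd - k(k-1)$, and since $p_1 k^2 = O(\log^4 d/d) = o_d(1)$, we obtain $(1-p_1)^{\partial_G(V(T))} \le (1+o_d(1)) e^{-p_1 k d}$ uniformly in $k$. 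Summing over $v$ (each $k$-vertex tree is counted $k$ times) and invoking Lemma~\ref{l: trees},
\begin{align*}
\sum_v \mathbb{P}\bigl[|C_{G(1)}(v)| = k\bigr] \le (1+o_d(1)) \cdot \frac{n k^{k-1}}{k!}(p_1 d)^{k-1} e^{-p_1 k d}.
\end{align*}
Since $p_1 d \ge 1 + \epsilon - 2\delta$ and $c \mapsto c^{k-1} e^{-ck}$ is decreasing on $(1, \infty)$, summing over $k = 1, \dots, \log^2 d$ and invoking \eqref{eq: ER crazy equality} yields $\mathbb{E}[|V_S(G(1))|] \le (1+o_d(1))(1 - y(\epsilon - 2\delta))n$; the tail for $k > \log^2 d$ is negligible by convergence of the series defining $F(1+\epsilon-2\delta)$.

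For the lower bound, I would instead upper bound the expected number of vertices in \emph{large} components. For each $v$, I couple the exploration of $C_{G(3)}(v)$ from above with a Galton-Watson tree with offspring distribution $\operatorname{Bin}(d, p)$: at every step, the number of newly revealed neighbours is stochastically dominated by $\operatorname{Bin}(d, p)$, since we inspect at most $d$ edges, each present independently with probability $p$. Consequently
\begin{align*}
\mathbb{P}\bigl[|C_{G(3)}(v)| > \log^2 d\bigr] \le \mathbb{P}\bigl[\text{total progeny of } \mathrm{GW}(\operatorname{Bin}(d, p)) > \log^2 d\bigr].
\end{align*}
As $d \to \infty$ with $pd = 1 + \epsilon$, $\operatorname{Bin}(d, p) \Rightarrow \operatorname{Po}(1+\epsilon)$, and the probability that the total progeny exceeds $\log^2 d$ converges to the survival probability $y(\epsilon)$ of the $\operatorname{Po}(1+\epsilon)$-branching tree. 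Hence $\mathbb{P}[v \in V_S(G(3))] \ge 1 - y(\epsilon) - o_d(1)$, and summing over $v$ gives $\mathbb{E}[|V_S(G(3))|] \ge (1 - o_d(1))(1 - y(\epsilon))n$.

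Both bounds on the mean are promoted to \textbf{whp} statements via Lemma~\ref{l: azuma}: changing a single edge of $G$ alters $|V_S(G(j))|$ by at most $2 \log^2 d$, since such a change can only merge or split components, and to affect $V_S$ the affected components must each have size at most $\log^2 d$. Applying Lemma~\ref{l: azuma} with $m = |E(G)| \le nd/2$, Lipschitz constant $C = 2\log^2 d$, and $p = O(1/d)$ gives
\begin{align*}
\mathbb{P}\Bigl[\bigl||V_S(G(j))| - \mathbb{E}[|V_S(G(j))|]\bigr| \ge t\Bigr] \le 2\exp\Bigl\{-\Omega\bigl(t^2/(n\log^4 d)\bigr)\Bigr\},
\end{align*}
so taking $t = n/\log^3 d$ yields a deviation of $o_d(n)$ with probability $1 - o(1)$, absorbed into the multiplicative $(1 \pm o_d(1))$ factor since $1 - y(\epsilon - 2\delta)$ and $1 - y(\epsilon)$ are bounded away from zero. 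The main subtlety I anticipate is ensuring the convergence $\mathbb{P}[\text{total progeny} > \log^2 d] \to y(\epsilon)$ holds uniformly as $d \to \infty$; the tree-counting calculation, while longer, is otherwise routine.
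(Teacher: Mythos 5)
Your proposal is correct and follows essentially the same route as the paper: a first-moment bound over spanning trees (with the boundary of a small tree bounded via regularity) combined with \eqref{eq: ER crazy equality} for the upper bound, stochastic domination of the cluster exploration by a $\mathrm{Bin}(d,p)$ Galton--Watson tree for the lower bound, and the edge-exposure martingale with Lipschitz constant $2\log^2 d$ via Lemma~\ref{l: azuma} for concentration. The only cosmetic difference is that the paper phrases the boundary bound through $i_k(G)\ge d-\log^2 d$ rather than $\partial_G(V(T))\ge kd-k(k-1)$, which amounts to the same estimate.
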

\begin{proof}
We begin by bounding $\mathbb{E}\left[|V_S(G(1))|\right]$ from above. Fix $k\le \log^2d$. Let $X_{k}$ be the number of vertices contained in components of order $k$ in $G(1)=G_{p_1}$. Let $\mathcal{T}_k$ denote the set of trees on $k$ vertices in $G$. Then, recalling $\rho_1=p_1$, we have
\begin{align*}
     \mathbb{E}\left[|V_S(G(1))|\right]&\le \mathbb{E}\left[\sum_{k=1}^{\log^2d}X_k\right]\le \sum_{k=1}^{\log^2d}k\sum_{T\in \mathcal{T}_k}\rho_1^{k-1}(1-\rho_1)^{e_G(T,T^C)}\\
    &\le n\sum_{k=1}^{\log^2d}\frac{(dk)^{k-1}}{k!}\rho_1^{k-1}(1-\rho_1)^{ki_k(G)},
\end{align*}
where the third inequality follows from the first inequality in Lemma \ref{l: trees}, and from the definition of the restricted isoperimetric constant $i_k(G)$. Since $k\le \log^2d$ and $G$ is $d$-regular, it follows that $i_k(G)\ge d-\log^2d$. Therefore, since $\rho_1 \geq \frac{1+\epsilon - 2\delta}{d}$ and $x\exp\left\{-ax\right\}$ is decreasing when $ax>1$, we have
\begin{align*}
    \mathbb{E}[|V_S(G(1))|]&\le n \sum_{k=1}^{\log^2d}\frac{k^{k-1}}{k!}(d\rho_1)^{k-1}\exp\left\{-\rho_1k(d-\log^2d)\right\}\\
    &\le n \sum_{k=1}^{\log^2d}\frac{k^{k-1}}{k!}(1+\epsilon-2\delta)^{k-1}\exp\left\{-(1+\epsilon-2\delta)k\left(1-\frac{\log^2d}{d}\right)\right\}\\
    &\le\exp\left\{\frac{2\log^4d}{d}\right\}n\sum_{k=1}^{\log^2d}\frac{k^{k-1}}{k!}(1+\epsilon-2\delta)^{k-1}\exp\left\{-(1+\epsilon-2\delta)k\right\} \\
    &\le (1+o_d(1))n\cdot F(1+\epsilon-2\delta)\\
    &= (1+o_d(1))\left(1-y(\epsilon-2\delta)\right)n,
\end{align*}
where the penultimate inequality and last equality follow from \eqref{eq: ER crazy equality}.

Let us now bound $|V_S(G(3))|$ from below. Since $G$ is $d$-regular, for every $v\in V(G)$ a standard coupling argument implies that $|C_{G(3)}(v)|$ is stochastically dominated by the number of vertices in a Galton-Watson tree with offspring distribution $Bin(d,\rho_3)$. Hence, standard results (see, for example, \cite[Theorem 4.3.12]{D19}) imply that for every $v\in V(G)$,
\begin{align*}
    \mathbb{P}\left[|C_{G(3)}(v)|\ge \log d\right]\le y(\epsilon)+o_d(1).
\end{align*}
Therefore,
\begin{align*}
    \mathbb{E}[|V_S(G(3))|]\ge \left(1-o_d(1)\right)\left(1-y(\epsilon)\right)n.
\end{align*}

Finally, we show that $|V_S(G(j))|$ is tightly concentrated around its mean for each $j\in \{1,2,3\}$. Indeed, let us consider the standard edge-exposure martingale on $G(j)$. Changing any edge can change the value of $|V_S(G(j))|$ by at most $2\log^2d$. Therefore, by Lemma \ref{l: azuma},
\begin{align*}
    \mathbb{P}\left[\big||V_S(G(j))|-\mathbb{E}[|V_S(G(j))|]\big|\ge n^{2/3}\right]&\le 2\exp\left\{-\frac{n^{4/3}}{2\cdot\frac{dn}{2}\cdot\rho_j\cdot(2\log^2d)^2}\right\}\\
    &\le 2\exp\left\{-\frac{n^{1/3}}{5\log^4 n}\right\}=o(1),
\end{align*}
where we used $d\le n$ in the last inequality.
\end{proof}

We now show that \textbf{whp} there are $o_d(n)$ vertices in $V(G)\setminus\left(V_S(G(j))\cup V_L(G(j))\right)$.
\begin{lemma}\label{l: gap statement}
For all $j\in \{1,2,3\}$, \textbf{whp} the number of vertices in components of order between $\log^2d$ and $\delta d$ in $G(j)$ is $o_d(n)$.
\end{lemma}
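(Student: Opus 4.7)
The approach is a standard first-moment calculation on the spanning trees of components. For each $j \in \{1,2,3\}$ and each integer $k \in [\log^2 d, \delta d]$, I would let $X_k^{(j)}$ denote the number of vertices of $G(j)$ lying in components of order exactly $k$. By a spanning-tree union bound, a set $S \subseteq V(G)$ with $|S|=k$ is a component of $G(j)$ only if (a)~$G(j)[S]$ contains a spanning tree (a union bound over spanning trees in $G[S]$ costs a factor $\rho_j^{k-1}$ each) and (b)~every edge in $\partial_G(S)$ is absent from $G(j)$ (costing a factor $(1-\rho_j)^{\partial_G(S)}$).

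The key point is that for $k \leq \delta d$, the $d$-regularity of $G$ alone gives $\partial_G(S) \geq dk - 2\binom{k}{2} \geq (1-\delta)dk$, which is far stronger than the bound $Ck$ that the isoperimetric hypothesis would provide; so in this intermediate range the argument depends only on the regularity of $G$, not on $C$. Using Lemma~\ref{l: trees} to bound the total number of $k$-vertex trees in $G$ by $n(ed)^{k-1}$, and using $\rho_j d \in [1+\epsilon-2\delta,\,1+\epsilon]$, one obtains
\[
\mathbb{E}[X_k^{(j)}] \;\leq\; nk \cdot \bigl(e(1+\epsilon)\,e^{-(1+\epsilon-2\delta)(1-\delta)}\bigr)^{k-1} \;=:\; nk\, r^{k-1}.
\]
For $\delta$ sufficiently small in terms of $\epsilon$, the standard supercritical estimate $e\mu\, e^{-\mu} < 1$ for $\mu > 1$ yields $r < 1$; concretely $r \leq (1+\epsilon)e^{-\epsilon+O(\delta)} = \exp(-\Omega(\epsilon^2))$, a constant independent of $d$.

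Summing over $k \in [\log^2 d, \delta d]$ gives $\mathbb{E}\bigl[\sum_k X_k^{(j)}\bigr] = O\bigl(n \log^2 d \cdot r^{\log^2 d}\bigr) = o_d(n)$, since the series is dominated by its first term and $r$ is a constant strictly less than $1$. Markov's inequality then yields $\sum_k X_k^{(j)} = o_d(n)$ with probability $1-o_d(1)$, and a union bound over $j\in\{1,2,3\}$ completes the proof. There is no serious obstacle here; the only care needed is to fix $\delta$ small enough (roughly $\delta \ll \epsilon^2$) so that the geometric ratio $r$ is strictly below $1$ uniformly in $d$, which is precisely the kind of choice already built into the three-round exposure.
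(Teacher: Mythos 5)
Your proposal is correct and takes essentially the same route as the paper: a first-moment bound over spanning trees via Lemma~\ref{l: trees}, using only the $d$-regularity of $G$ to get $\partial_G(S)\ge (1-\delta)dk$ for $k\le \delta d$ (the paper phrases this as $i_k(G)\ge(1-\delta)d$), so that each tree contributes weight $\exp\{-\Omega(\epsilon^2)k\}$ and the sum over $k\ge\log^2 d$ is $o_d(n)$. The only cosmetic difference is that you apply Markov once to the summed expectation, whereas the paper applies Markov to each $X_k^{(j)}$ separately and union-bounds over the at most $\delta d$ values of $k$; both are valid.
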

\begin{proof}
Fix $\log^2d \le k\le \delta d$. As in the proof of Lemma \ref{l: |V_S(G(1))|}, let $X^{(j)}_k$ be the number of vertices in components of order $k$ in $G(j)$. Then, by the second inequality in Lemma \ref{l: trees}, we obtain
\begin{align*}
    \mathbb{E}[X^{(j)}_k]&\le k\sum_{T\in \mathcal{T}_k}\rho_j^{k-1}(1-\rho_j)^{e_G(T,T^C)}\\
    &\le nk(ed)^{k-1}\rho_j^{k-1}(1-\rho_j)^{ki_k(G)}.
\end{align*}
Since $k\le \delta d$ and $G$ is $d$-regular, we have that $i_k(G)\ge (1-\delta)d$. Furthermore, $\rho_j \geq \frac{1+\epsilon - 2\delta}{d}$ for each $j$, and since $x\exp\left\{-ax\right\}$ is decreasing when $ax>1$,
\begin{align*}
    \mathbb{E}[X^{(j)}_k]&\le nk(ed)^{k-1}\rho_j^{k-1}(1-\rho_j)^{(1-\delta)dk}\\
    &\le nk\left[ed\rho_j\exp\left\{-(1-\delta)d\rho_j\right\}\right]^k\\
    &\le nd\left[e(1+\epsilon-2\delta)\exp\left\{-(1-\delta)(1+\epsilon-2\delta)\right\}\right]^k\\
    &\le nd\left[\left(1+\epsilon-2\delta\right)\exp\left\{-\epsilon+4\delta\right\}\right]^k.
\end{align*}
Furthermore, we have that $1+x\le \exp\left\{x-\frac{x^2}{3}\right\}$ for small enough $x$ and therefore
\begin{align*}
    \mathbb{E}[X^{(j)}_k]&\le nd\left[\exp\left\{\epsilon-2\delta-\frac{\left(\epsilon-2\delta\right)^2}{3}\right\}\exp\left\{-\epsilon+4\delta\right\}\right]^k\\
    &\le nd\exp\left\{-\left(\frac{\epsilon^2}{3}-3\delta\right)k\right\}\\
    &\le nd\exp\left\{-\epsilon^3\log^2d\right\},
\end{align*}
where we used that $k\ge \log^2d$ and that $\delta$ is small enough with respect to $\epsilon$. By Markov's inequality, we have
\begin{align*}
    \mathbb{P}\left[X^{(j)}_k\ge nd^3\exp\left\{-\epsilon^3\log^2d\right\}\right]\le\frac{1}{d^2}.
\end{align*}
Thus, by the union bound over all the at most $\delta d$ possible values of $k$, \textbf{whp} there are at most $\delta d\cdot nd^3\exp\left\{-\epsilon^3\log^2d\right\}=o_d(n)$ vertices in components of order between $\log^2d$ and $\delta d$.
\end{proof}

We can now bound the number of vertices in $|V_L(G(j))|$.

\begin{lemma}\label{l: |V_L(G(j))|}
For all $j\in \{1,2,3\}$, \textbf{whp} $|V_L(G(j))|\in \left[\left(1-o_d(1)\right)y(\epsilon-2\delta)n, \left(1+o_d(1)\right)y(\epsilon)n\right]$.
\end{lemma}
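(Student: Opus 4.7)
The plan is to observe that this lemma is essentially a corollary of Lemmas \ref{l: |V_S(G(1))|} and \ref{l: gap statement}, together with the monotonicity $V_L(G(1)) \subseteq V_L(G(2)) \subseteq V_L(G(3))$. This monotonicity holds because $G(1) \subseteq G(2) \subseteq G(3)$ as edge sets, so for every vertex $v$ one has $|C_{G(1)}(v)| \leq |C_{G(2)}(v)| \leq |C_{G(3)}(v)|$; in particular, once $v$ lies in a component of order at least $\delta d$ at some stage, it continues to do so in every subsequent stage. Given this, it suffices to lower bound $|V_L(G(1))|$ and upper bound $|V_L(G(3))|$, and the middle case $j=2$ follows by sandwiching.

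The key identity is the partition of $V(G)$ according to the order of the component containing each vertex in $G(j)$: every vertex lies either in a component of order at most $\log^2 d$ (so it is in $V_S(G(j))$), in a component of order strictly between $\log^2 d$ and $\delta d$ (an ``intermediate'' component), or in a component of order at least $\delta d$ (so it is in $V_L(G(j))$). Writing $N_j$ for the number of vertices in intermediate components of $G(j)$, Lemma \ref{l: gap statement} gives $N_j = o_d(n)$ \textbf{whp}, and
\begin{align*}
|V_L(G(j))| = n - |V_S(G(j))| - N_j.
\end{align*}

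For the upper bound, I would apply this identity with $j=3$, use the lower bound $|V_S(G(3))| \geq (1-o_d(1))(1-y(\epsilon))n$ from Lemma \ref{l: |V_S(G(1))|}, and drop the nonnegative term $N_3$ to conclude $|V_L(G(3))| \leq (1+o_d(1))y(\epsilon)n$ \textbf{whp}. For the lower bound, I would apply the identity with $j=1$, use the upper bound $|V_S(G(1))| \leq (1+o_d(1))(1-y(\epsilon-2\delta))n$ from Lemma \ref{l: |V_S(G(1))|} together with $N_1 = o_d(n)$, to obtain $|V_L(G(1))| \geq (1-o_d(1))y(\epsilon-2\delta)n$ \textbf{whp}. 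A union bound over the two failure events then yields both bounds, and the monotone inclusion extends them to every $j \in \{1,2,3\}$.

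There is no genuine obstacle here; the real content lives in Lemmas \ref{l: |V_S(G(1))|} and \ref{l: gap statement}. The only step requiring a moment of care is checking that ``lying in a component of order at least $\delta d$'' is indeed a monotone property of the percolation, which justifies the sandwiching used to transfer the bounds at $j=1,3$ to the intermediate stage $j=2$.
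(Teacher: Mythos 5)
Your proposal is correct and follows essentially the same route as the paper: both rest on the partition of $V(G)$ into small, intermediate, and large components together with Lemmas \ref{l: |V_S(G(1))|} and \ref{l: gap statement}, the only cosmetic difference being that you invoke monotonicity of $V_L$ to sandwich the intermediate stage, whereas the paper uses the (equivalent) monotonicity of $V_S$ to get both bounds on $|V_S(G(j))|$ for every $j$ and then applies the partition identity directly at each stage.
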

\begin{proof}
By Lemma \ref{l: |V_S(G(1))|}, we have that \textbf{whp} for all $j\in \{1,2,3\}$, $$|V_S(G(j))|\in \left[\left(1-o_d(1)\right)\left(1-y(\epsilon)\right)n,\left(1+o_d(1)\right)\left(1-y(\epsilon-2\delta)\right)n\right].$$ Furthermore, by Lemma \ref{l: gap statement}, \textbf{whp} for all $j\in \{1,2,3\}$, $|V(G)\setminus \left(V_S(G(j))\cup V_L(G(j))\right)|=o_d(n)$. Therefore, \textbf{whp} for all $j\in \{1,2,3\}$,
\begin{align*}
    &|V_L(G(j))|\le n+o_d(n)-\left(1-o_d(1)\right)\left(1-y(\epsilon)\right)n=\left(1+o_d(1)\right)y(\epsilon)n, \mathrm{and,}\\
    &|V_L(G(j))|\ge n-o_d(n)-\left(1-o_d(1)\right)\left(1-y(\epsilon-2\delta)\right)n=\left(1-o_d(1)\right)y(\epsilon-2\delta)n,
\end{align*}
as required.
\end{proof}

We now turn to the task of estimating $|W_L(G(2))|$. We begin with a lemma bounding from above the probability that a vertex has many neighbours which do not lie in large components in $G(1)$. Throughout the rest of the section, we suppose we have enumerated the vertices of $V(G)$ according to some arbitrary ordering.
\begin{lemma}\label{l: W_L(G(2)) aux}
Let $v\in V(G)$. The probability that there exists $U\subseteq N_G(v)$ such that
\begin{align*}
    \left|\bigcup_{u\in U}C_{G(1)}(u)\right|\in \left[\delta d, 2\delta d\right]
\end{align*}
and $|U|\le \delta^2d$ is at most $\exp\left\{-\delta^2 d\right\}$.
\end{lemma}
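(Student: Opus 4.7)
The plan is a union bound over the tree-like structures encoding $T := \bigcup_{u \in U} C_{G(1)}(u)$. Suppose the event holds and take $U$ to be a minimal witness: one vertex per component of $G(1)[T]$. Then $c := |U| \leq \delta^2 d$ is exactly the number of components of $G(1)[T]$, each containing some vertex of $N_G(v)$, and $k := |T| \in [\delta d, 2\delta d]$.

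Consider first the main case $v \notin T$. Build an auxiliary tree $\tau$ on $T \cup \{v\}$ by taking a spanning forest of $G(1)[T]$ (with $c$ trees and $k - c$ edges) and adjoining the $c$ edges $(v,u)$ for $u \in U$. Then $\tau$ is a subgraph of $G$, is a tree on $k + 1$ vertices containing $v$, and $v$ has exactly $c$ neighbours in $\tau$, all in $N_G(v)$. Conditional on $\tau$, the event forces the $k - c$ non-$v$ edges of $\tau$ to lie in $G(1)$, and, since $T$ is a union of components of $G(1)$, all $\partial_G(T)$ boundary edges of $T$ in $G$ to be absent from $G(1)$. Using $|T| \leq 2\delta d$ gives $\partial_G(T) \geq (1 - 2\delta)dk$, so for each fixed $\tau$ this event has probability at most $p_1^{k - c}(1 - p_1)^{(1 - 2\delta)dk}$.

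To count such trees $\tau$ for given $(k, c)$, choose the $c$ neighbours of $v$ in $N_G(v)$ ($\binom{d}{c}$ ways), distribute the $k - c$ remaining vertices among the $c$ subtrees ($\binom{k-1}{c-1}$ compositions), and bound each rooted subtree of size $m$ using (the proof of) Lemma \ref{l: trees} by $(ed)^{m-1}$, giving in total at most $\binom{d}{c}\binom{k-1}{c-1}(ed)^{k-c}$ trees. Substituting $p_1 \leq (1 + \epsilon)/d$ and $p_1 \geq (1 + \epsilon - 2\delta)/d$, and using $\ln(1 + \epsilon) \leq \epsilon - \epsilon^2/3$ for small $\epsilon$, the product $(ed)^{k-c} p_1^{k-c} (1-p_1)^{(1-2\delta)dk}$ simplifies, for $\delta$ a sufficiently small constant (e.g., $\delta \leq \epsilon^2/24$), to at most $\exp(-k\epsilon^2/6 - c)$.

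It remains to bound
\[
    \sum_{k \in [\delta d,\, 2\delta d]} \sum_{c = 1}^{\delta^2 d} \binom{d}{c}\binom{k-1}{c-1} \exp\!\left(-k\epsilon^2/6 - c\right).
\]
Using $\binom{d}{c}\binom{k-1}{c-1} \leq (e^2 dk/c^2)^c$, each summand is at most $(e\, dk/c^2)^c \exp(-k\epsilon^2/6)$; a short calculation shows that $(e\, dk/c^2)^c$ is increasing in $c$ on $[1, \delta^2 d]$ for $k \leq 2\delta d$, so its maximum (attained at $c = \delta^2 d$) is $\exp(O(\delta^2 d \log(1/\delta)))$. Summing over $k \geq \delta d$ then yields an overall bound of $\exp(O(\delta^2 d \log(1/\delta)) - \delta d \epsilon^2 / 6)$, which is at most $\exp(-\delta^2 d)$ provided $\delta$ is a sufficiently small constant depending on $\epsilon$ (it suffices that $\delta \log(1/\delta) = O(\epsilon^2)$, e.g., $\delta = \epsilon^4$). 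The case $v \in T$ is handled by an analogous argument, with the auxiliary tree placed on $T$ itself and an additional factor of $(1-p_1)^{c-1}$ arising from the $c - 1$ inter-component edges $(v, u_i)$. The main obstacle in both cases is absorbing the $\binom{d}{c}$ combinatorial contribution of $\exp(\Theta(\delta^2 d \log(1/\delta)))$ into the boundary-absence decay of $\exp(-\Omega(\delta d \epsilon^2))$, which is precisely what forces $\delta$ to be chosen sufficiently small in terms of $\epsilon$.
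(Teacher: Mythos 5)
Your proof is correct and follows essentially the same route as the paper's: a union bound over spanning forests of $T$ rooted at a subset of $N_G(v)$, counted via Lemma~\ref{l: trees} and the composition count $\binom{k-1}{c-1}$, with the gain $\exp\{-\Omega(\epsilon^2 k)\}$ from the present/absent edge probabilities absorbing the combinatorial factor $\exp\{O(\delta^2 d\log(1/\delta))\}$ once $\delta$ is small relative to $\epsilon$. The only cosmetic difference is that you package the roots into an auxiliary tree through $v$ (forcing a minor case split on whether $v\in T$), whereas the paper simply chooses the root set $U'\subseteq N_G(v)$ at a cost of $\binom{d}{\le\delta^2 d}$ and works with the forest directly.
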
 
\begin{proof}
We restrict ourselves to $U'\subseteq U$, such that $C_{G(1)}(u)$ are disjoint for each $u\in U'$. Let $F$ be a spanning forest of the components meeting $U'$ in $G(1)$ such that $|V(F)|=k\in\left[\delta d , 2\delta d\right]$. This forest is composed of some $1\le \ell \le \delta^2d$ tree components, $B_1, \ldots, B_{\ell}$, so we may assume that for every $i\neq j$, $B_i\cap B_j=\varnothing$ and all the edges leaving each $B_i$ are not in $G(1)$. Each $B_i$ contains a unique vertex $u_i \in U'$ for $1\le i \le \ell$. Note that if there is a subset $U \subseteq N_G(v)$ satisfying the conditions of the lemma, then such an $F$ exists.

Let us now bound from above the probability such a forest $F$ exists. There are at most $\sum_{m=1}^{\delta^2d}\binom{d}{m}\le \left(\frac{e}{\delta^2}\right)^{\delta^2d}$ ways to choose $U'\subseteq N_G(v)$ with $|U'|\le \delta^2d$. We can then specify the forest $F$ by choosing $|V(F)|=k \in [\delta d, 2\delta d]$, the number $1\leq \ell \leq \delta^2 d$ of tree components, their sizes $|B_i| = k_i$ such that $\sum_{i=1}^{\ell} k_i = k$ and finally the tree components $\{B_1,\ldots, B_\ell\}$, for which, by the second inequality in Lemma \ref{l: trees}, we have at most $\prod_{i=1}^\ell (ed)^{k_i-1} = (ed)^{k-\ell}$ choices (note that there is no factor of $n$ in the estimate since the roots of the tree components are determined --- these are the vertices in $U'$). For a fixed forest $F$, there are $k-\ell$ edges which must appear in $G(1)$, which happens with probability $\rho_1^{k-\ell}$. Since $|V(F)| \leq 2\delta d$ there are at least $k(d-2\delta d)$ edges in the boundary of $F$ which must not appear in $G(1)$, which happens with probability at most $(1-\rho_1)^{k(d-2\delta d)}$. Thus, by the union bound, the probability such $F$ exists is at most
\begin{align*}
     \left(\frac{e}{\delta^2}\right)^{\delta^2d}\sum_{k=\delta d}^{2\delta d}\sum_{\ell=1}^{\delta^2d}\sum_{\substack{k_1,\ldots,k_{\ell}\\k_1+\cdots+k_{\ell}=k}}(ed)^{k-\ell}\rho_1^{k-\ell}(1-\rho_1)^{k(d-2\delta)d}.
\end{align*}

Since $\rho_1\ge 1+\epsilon-2\delta$ and as $x\exp\{-ax\}$ is decreasing when $ax>1$, 
\begin{align*}
    (ed)^{k-\ell}\rho_1^{k-\ell}(1-\rho_1)^{k(d-2\delta)d}&\le \left[e(1+\epsilon-2\delta)\exp\left\{-(1-2\delta)(1+\epsilon-2\delta)\right\}\right]^k\\
    &\le \left[\exp\left\{\epsilon-2\delta-\frac{(\epsilon-2\delta)^2}{3}\right\}\exp\left\{-\epsilon+5\delta\right\}\right]^k\\
    &\le \exp\left\{-\epsilon^3k\right\},
\end{align*}
where the penultimate inequality follows from $1+x \le \exp\left\{x-\frac{x^2}{3}\right\}$ for small enough $x$, and the last inequality follows since $\delta$ is sufficiently small with respect to $\epsilon$. There are $\binom{k-1}{\ell-1}$ ways to choose $k_1,\ldots, k_{\ell}$ such that $\sum_{i=1}^{\ell}k_i=k$. Recalling that $k\in \left[\delta d, 2\delta d\right]$ and $\ell\in \left[\delta^2d\right]$, we have that $\binom{k-1}{\ell-1}\le \left(\frac{2e}{\delta}\right)^{\delta^2d}$. Altogether, the probability that such $F$ exists is at most
\begin{align*}
    \left(\frac{2e}{\delta^2}\right)^{2\delta^2d}\exp\left\{-\epsilon^3\delta d\right\}\le \exp\left\{-\frac{\epsilon^3 \delta d}{2}\right\}\le \exp\left\{-\delta^2d\right\},
\end{align*}
where we used the fact that $\delta$ is sufficiently small with respect to $\epsilon$.
\end{proof}

We are now ready to bound $|W_L(G(2))|$. Recall that $$W_L(G(2))=\left\{w\in V(G)\colon |N_G(w)\cap V_{L}(G(2))|\ge \delta^5d\right\}.$$ 

\begin{lemma}\label{l: |W_L(G(2))|}
\textbf{Whp}, $|W_L(G(2))|\ge \left(1-\exp\left\{-\delta^6d\right\}\right)n$.
\end{lemma}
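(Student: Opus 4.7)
The plan is to establish a per-vertex bound: for each fixed $w\in V(G)$,
\begin{align*}
\mathbb{P}[w\notin W_L(G(2))]\le 2\exp\{-2\delta^6d\}.
\end{align*}
Linearity of expectation then gives $\mathbb{E}\bigl[|V(G)\setminus W_L(G(2))|\bigr]\le 2\exp\{-2\delta^6d\}n$, and Markov's inequality yields $|V(G)\setminus W_L(G(2))|\le\exp\{-\delta^6 d\}n$ with probability $1-o_d(1)$, which is exactly what the lemma asks for.

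To prove the per-vertex bound, I would work with the two-round exposure $G(2)=G(1)\cup G_{p_2}$ and let $\mathcal{G}_w$ be the event from Lemma~\ref{l: W_L(G(2)) aux} applied to $v=w$, which has probability at least $1-\exp\{-\delta^2 d\}\ge 1-\exp\{-2\delta^6 d\}$ for $\delta$ small enough relative to $\epsilon$. A short induction (starting from a single representative and iteratively adding new distinct small $G(1)$-components, each of size less than $\delta d$, and using that $|\bigcup_{u\in U}C_{G(1)}(u)|\notin[\delta d,2\delta d]$ for every $U\subseteq N_G(w)$ with $|U|\le \delta^2d$) yields the crucial consequence of $\mathcal{G}_w$: every collection of at most $\delta^2 d$ pairwise distinct small $G(1)$-components meeting $N_G(w)$ has total size strictly less than $\delta d$. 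If $|N_G(w)\cap V_L(G(1))|\ge\delta^5 d$ then $w\in W_L(G(2))$ trivially, since $V_L(G(1))\subseteq V_L(G(2))$. Otherwise, I would run a BFS-style exploration on $G(2)$ with deferred decisions on $G_{p_2}$, processing the neighbours $u_1,\ldots,u_d$ in any order; for each $u_i$ not already in a previously discovered $G(2)$-component, start a BFS from $u_i$, halting as soon as $\delta d$ vertices are discovered (a ``success'', witnessing $u_i\in V_L(G(2))$) or the component is exhausted (a ``failure'').

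The central claim is that each fresh BFS succeeds with conditional probability at least $\epsilon/4$ given the history and $\mathcal{G}_w$. The idea is that, by the consequence of $\mathcal{G}_w$, so long as the process has initiated at most $\delta^2 d$ failed BFS's, the total number of vertices explored in failed BFS's is bounded by $\delta d$; combined with the at most $\delta^5 d\cdot \delta d=\delta^6 d^2$ vertices explored in the at most $\delta^5 d$ successful BFS's (each halted at $\delta d$ vertices), the total exploration footprint stays small compared to $d^2$. Hence each fresh BFS can be coupled from below with a Galton--Watson branching process with offspring distribution $\mathrm{Bin}(d-O(\delta d),\rho_2)$ whose mean exceeds $1+\epsilon/2$, and such a branching process reaches size $\delta d$ with probability at least $\epsilon/4$ for $d$ large. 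By deferred decisions, the fresh-BFS outcomes are conditionally independent Bernoulli trials with success probability at least $\epsilon/4$, so Chernoff's inequality (Lemma~\ref{l: chernoff}) bounds the probability that fewer than $\delta^5 d$ of the first $\delta^2 d$ fresh BFS's succeed by $\exp\{-\Omega(\epsilon\delta^2 d)\}\le\exp\{-2\delta^6 d\}$ for $\delta$ sufficiently small relative to $\epsilon$. Combining with the failure probability of $\mathcal{G}_w$ yields the desired per-vertex estimate.

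The main obstacle is to make this coupling uniform in the history: showing that, up to the point where either $\delta^5 d$ successes have been recorded or $\delta^2 d$ fresh BFS's have been initiated, the exploration remains small enough both in vertex count and in the number of edges of $G$ queried so that the fresh BFS genuinely dominates a supercritical branching process, and that the fresh outcomes truly are conditionally independent. Careful book-keeping via deferred decisions on $G_{p_2}$, together with the quantitative consequence of Lemma~\ref{l: W_L(G(2)) aux} controlling the accumulation of small components meeting $N_G(w)$, should together enforce that the failed explorations cannot pile up too many vertices before enough successful ones have been produced.
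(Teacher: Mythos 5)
Your overall architecture (a per-vertex failure bound of order $\exp\{-\Theta(\delta^{O(1)}d)\}$ followed by Markov, with Lemma~\ref{l: W_L(G(2)) aux} controlling the accumulation of small components among the neighbours of $w$) matches the paper's, but the central coupling step has a genuine gap that the paper's proof is specifically engineered to avoid. First, a mismatch of graphs: the consequence you extract from $\mathcal{G}_w$ controls the total size of unions $\bigcup_{u\in U}C_{G(1)}(u)$ with $U\subseteq N_G(w)$, i.e.\ of $G(1)$-components \emph{rooted at neighbours of $w$}, whereas your failed BFS's exhaust $G(2)$-components. A small $G(2)$-component of $u_i$ is a union of many small $G(1)$-components glued by $G_{p_2}$-edges, almost none of which need meet $N_G(w)$, so the claimed bound of $\delta d$ on the total failed footprint does not follow from $\mathcal{G}_w$. (This particular issue is patchable: the proof of Lemma~\ref{l: W_L(G(2)) aux} only uses $d\rho_1\ge 1+\epsilon-2\delta$, so its analogue holds for $G(2)\sim G_{\rho_2}$; the paper instead sidesteps it by running the exploration inside $G(1)$ only, so that its set $X_S$ really is a union of $G(1)$-components of roots in $N_G(v)$.)

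The more serious gap is the inference ``total exploration footprint $\ll d^2$, hence each fresh BFS dominates a branching process with offspring $\mathrm{Bin}(d-O(\delta d),\rho_2)$.'' A discovered set of size $\Theta(\delta^3 d^2)$ (which is $\gg d$) does not give a \emph{per-vertex} bound of $O(\delta d)$ on the number of already-discovered neighbours: a single queried vertex $x$ may have up to $d$ neighbours among previously processed vertices, and since $x$ was not discovered from them, all of those edges are already determined to be absent. Conditionally, the offspring distribution at such an $x$ can be as bad as $\mathrm{Bin}(o(d),\rho_2)$, i.e.\ subcritical, and the stochastic domination fails. This is exactly the difficulty the paper's modified process is built around: it explores only $G(1)$, keeps the $G_{p_2}$-randomness on edges into the large set $X_L$ unexposed, and when a queried vertex has at least $\delta d$ neighbours in $X_L$ it \emph{abandons} the branching-process coupling and instead sprinkles those reserved $G_{p_2}$-edges, succeeding with probability at least $\delta^2/2$; only when no such vertex appears does it invoke the coupling, and then the $4\delta d$ bound on discovered neighbours ($\delta d$ from the current epoch, $\delta d$ from $X_L$, $2\delta d$ from $X_S$) is legitimate. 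Your proposal has no mechanism for handling a vertex with many neighbours in previously discovered large components, and your closing paragraph acknowledges rather than resolves this; as written, the per-epoch success probability bound, and hence the whole per-vertex estimate, is not established.
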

\begin{proof}
Let $v\in V(G)$, and let $N_G(v)=\{u_1,\ldots, u_d\}$. Let us bound the probability that ${v\notin W_L(G(2))}$. To that end, we consider a modified BFS process, during which we will keep track of the sets $Q$ and $W$ as in Section \ref{s: BFS}, as well as the set $X_L$ of vertices in large components (of order at least $\delta d$) in $G(2)$, and a set $X_S$ of vertices in components which might not be large, where initially $X_L = X_S = \varnothing$.

The process runs in a number of \textit{epochs}, at the start of which the queue $Q$ is empty. We begin the next epoch by moving the first $u_i\in N_G(v)\setminus (X_L\cup X_S)$ into $Q$. We then run the BFS algorithm in $G(1)$ starting at $u_i$, where we copy every vertex discovered during the epoch to a set $K$, and we empty $K$ at the end of every epoch. We call $u_i$ the \textit{root} of the epoch. We then modify the BFS algorithm in the following ways: 
\begin{itemize}
    \item If $|K|=\delta d$ at some moment during the epoch, then once we finish exploring the connected component in $G(1)$, we move all the vertices in $K$ to $X_L$.
    \item Otherwise, if at some moment during the epoch, the first vertex $w$ in the queue $Q$ has at least $\delta d$ neighbours in $X_L$, we expose the edges in $G_{p_2}$ between $w$ and $X_L$. If any of the queries are successful, we end the epoch, move all the vertices in $Q$ to the set $W$ of vertices whose exploration is complete, and move all the vertices in $K$ to $X_L$. Otherwise, we continue exploring the component in $G(1)$.
    \item Otherwise, if we have not moved $u_i$ to $X_L$ due to the first two items, and we are at the end of the epoch since $Q=\varnothing$, we query all the unqueried edges between $K$ and $X_L$ in $G(1)$. If any of these queries are successful, we move all the vertices in $K$ to $X_L$. Otherwise, we move all the vertices in $K$ to $X_S$.
\end{itemize}
We stop the process either after $\delta^2d$ epochs, or once $N_G(v)\setminus (X_L\cup X_S)=\varnothing$.

Note that if all the vertices in $K$ moved to $X_S$ at the end of an epoch rooted at $u_i$, then the component of $u_i$ in $G(1)$ has size at most $\delta d$, since all edges incident to this component in $G(1)$, and potentially some in $G(2)$, were exposed during this epoch. We thus claim that by Lemma \ref{l: W_L(G(2)) aux} at the end of this process $|X_S|\le 2\delta d$ with probability at least $1-\exp\left\{-\delta^2d\right\}$. Indeed, let $Y$ be the set of all roots $u_i$ of epochs with $u_i\in X_S$. Since there are at most $\delta^2d$ epochs, we have that $|Y|\le \delta^2d$. Suppose that at the end of this process $|X_S|>2\delta d$. Consider the first moment when $|X_S|\ge \delta d$, and let $U\subseteq Y$ be the set of all roots $u_i$ of epochs with $u_i\in X_S$ at that moment. Note that since $|X_S|$ changes in size by at most $\delta d$ at each time, at that moment we have that $|X_S|\le 2\delta d$, and thus $|X_S|\in \left[\delta d, 2\delta d\right]$. Therefore, $U$ satisfies the conditions of Lemma \ref{l: W_L(G(2)) aux}.

Now, conditioned on $|X_S|\le 2\delta d$, at the end of the process we have two cases. If we had less than $\delta^2d$ epochs, then the process stopped since $N_G(v)\subseteq X_L\cup X_S$. However then, since $|X_S|\le 2\delta d$, we have that $|X_L|\ge (1-2\delta)d>\delta^5d$, and thus $v\in W_L(G(2))$. We may thus assume we had $\delta^2d$ epochs.

We claim that, if $|X_S| \leq 2 \delta d$ at the start of an epoch, then, conditioned on the full history of the process so far, the probability that the root $u_i$ of the epoch is moved into $X_L$ at the end of the epoch is at least $\frac{\delta^2}{2}$. Indeed, assume that we have yet to discover $\delta d$ vertices during the epoch rooted at $u_i$. If during this epoch no vertices with more than $\delta d$ neighbours in $X_L$ were discovered, then by assumption each vertex discovered during the epoch has at most $4\delta d$ neighbours which have already been discovered when it is queried --- at most $\delta d$ from the current epoch, at most $\delta d$ from $X_L$ and at most $2\delta d $ from $X_S$. Therefore, we can couple the BFS exploration in this epoch with a Galton-Watson tree $B$ with offspring distribution $Bin\left((1-4\delta)d,p_1\right)$, such that the BFS exploration stochastically dominates $B$ as long as $|B|\le \delta d$. Hence, standard results imply that with probability at least $y((1-4\delta)dp_1)\ge \delta$, $|B|$ grows to infinity, and in particular the BFS exploration discovers at least $\delta d$ vertices. 

Conversely, if during this epoch a vertex $w$ with at least $\delta d$ neighbours in $X_L$ was discovered, then, the probability that $u_i\in X_L$ is at least the probability that one of the $\delta d$ edges from $w$ to $X_L$ lies in $G_{p_2}$, which is at least 
\begin{align*}
    1-(1-p_2)^{\delta d}\ge 1-\exp\left\{-\delta dp_2\right\}= 1-\exp\left\{-\delta^2\right\}\ge \frac{\delta^2}{2},
\end{align*}
where the last inequality follows since $\exp\left\{-x\right\}\le 1-\frac{x}{2}$ for small enough $x$.

Hence, conditioned on the event that $|X_S| \leq 2\delta d$, which holds with probability at least $1-\exp\left\{-\delta^2d\right\}$ by Lemma \ref{l: W_L(G(2)) aux}, and assuming that we had $\delta^2d$ epochs, then $|X_L\cap N_G(v)|$ stochastically dominates $Bin\left(\delta^2d, \frac{\delta^2}{2} \right)$. Hence, by Lemma \ref{l:  chernoff}, with probability at least $1-\exp\left\{-\delta^5d\right\}$ we have that $|X_L\cap N_G(v)|\ge \delta^5d$. 

Therefore, putting all these together, the probability that $|X_L| < \delta^5 d$ is at most $$\exp\left\{-\delta^2d\right\} + \exp\left\{-\delta^5d\right\} \leq 2\exp\left\{-\delta^5d\right\}.$$ It follows that the probability that any particular $v\in V(G)$ is not in $W_L(G(2))$ is at most $2\exp\{-\delta^5d\}$, and so by Markov's inequality, \textbf{whp} $|V(G)\setminus W_L(G(2))|\le \exp\{-\delta^6d\}n$.
\end{proof}

We are now ready to prove Theorem \ref{th: weak expander}.
\begin{proof}[Proof of Theorem \ref{th: weak expander}]
By Lemma \ref{l: |V_L(G(j))|}, \textbf{whp} 
\begin{align*}
    |V_L(G_p)|\le\left(1+o_d(1)\right)y(\epsilon)n, \quad |V_L(G(2))|\ge \left(1-o_d(1)\right)y(\epsilon-2\delta)n\ge y(\epsilon)n-5\delta n.  
\end{align*}

We claim that, after sprinkling with probability $p_3$, \textbf{whp} all but of most $\delta n$ of the vertices in $V_L(G(2))$ merge into a single component. Let $A\sqcup B$ be a partition of the components forming $V_L(G(2))$, satisfying $\delta n \le k \coloneqq |A|\le |B|$. Let $A'$ be $A$ together with all the vertices in $V(G)\setminus B$ which have at least $\frac{\delta^5d}{2}$ neighbours in $A$, and let $B'$ be $B$ together with all the vertices in $V(G)\setminus A'$ which have at least $\frac{\delta^5d}{2}$ neighbours in $B$. Note that $A'\cap B'=\varnothing$ and $V_L(G(2))\cup W_L(G(2))\subseteq A'\cup B'$.

Fix the partition $A\sqcup B$ of (the components forming) $V_L(G(2))$. Note that $|A'|, |B'|\ge k$ and one of them is of order at most $\frac{n}{2}$. We may assume without loss of generality that $k \le |A'|\le \frac{n}{2}$. Thus, by our assumption on the isoperimetric constant of $G$, that is, $i(G)\ge C$, there are at least $Ck$ edges in $G$ between $A'$ and $(A')^C$. Since $G$ is $d$-regular, and by Lemma \ref{l: |W_L(G(2))|} \textbf{whp} $|V(G)\setminus W_L(G(2))|\le \exp\left\{-\delta^6d\right\}n=o(n/d)$, it follows that $e_G(A', B')\ge Ck/2$.

Let us now expose these edges (between $A'$ and $B'$) with probability $p_3$. By Lemma \ref{l: matching}, with probability at least $1-\exp\left\{-\frac{\delta^2Ck}{d}\right\}$ there is a matching of size at least $\frac{\delta^2Ck}{d}$ between $A'$ and $B'$ in $G_{p_3}$.
\begin{figure}[H]
\centering
\includegraphics[width=0.8\textwidth]{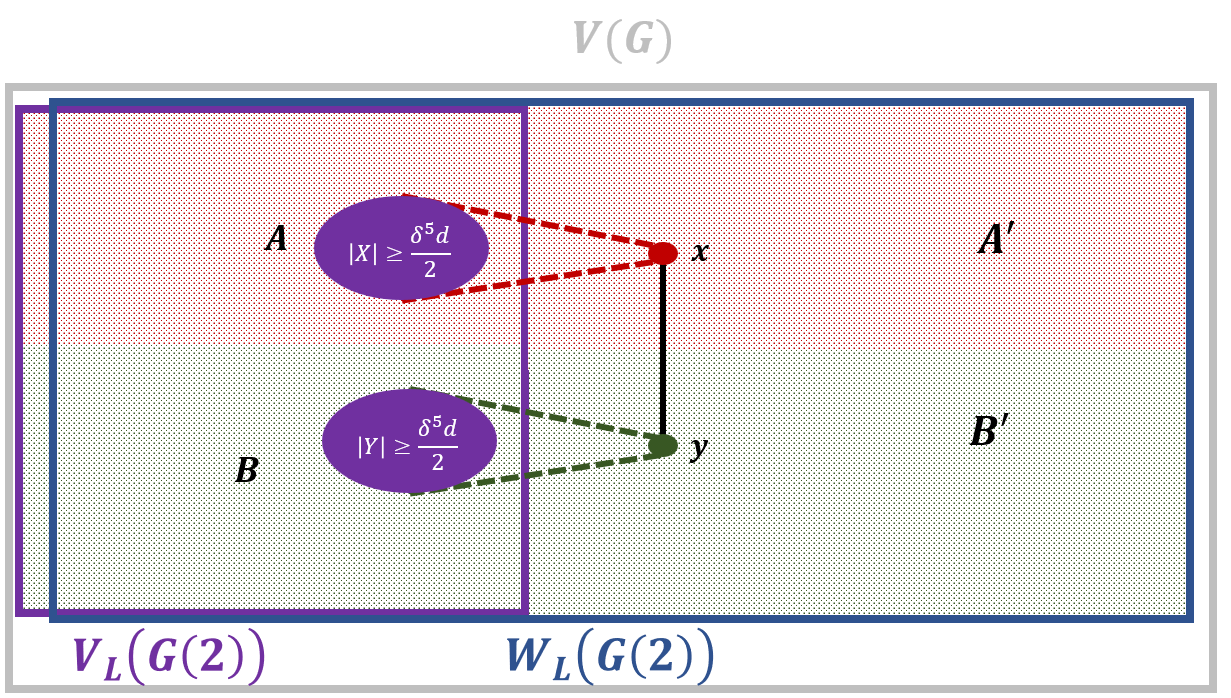}
\caption{Illustration of the sets utilised in the proof of Theorem \ref{th: weak expander} and their properties. By Lemma \ref{l: |W_L(G(2))|}, \textbf{whp} $|W_L(G(2))|=n-o(n/d)$ (appears in dark blue). $V_L(G(2))$ appears in purple, together with a partition of it into $A$ (red-dotted) and $B$ (green-dotted). This extends to $A'$ (red-dotted) and $B'$ (green-dotted), which cover $W_L(G(2))$. In the figure, the edge $xy$ between $A'$ and $B'$ appears in solid black. $X$, the neighbourhood of $x$ in $A$, appears in purple, and its size is lower-bounded by construction. Similarly, $Y$, the neighbourhood of $y$ in $B$, appears in purple with its size also lower-bounded by construction. When sprinkling with probability $p_3$, with very high probability there will be many black edges between $A'$ and $B'$ in $G_{p_3}$, which we can then extend to paths of length three between $A$ and $B$.}
\label{f: proof}
\end{figure}

Assume such a matching exists. By the definition of $A'$ and $B'$, from the endpoints of each edge in the matching there are at least $\frac{\delta^5d}{2}$ edges into $A$, and at least $\frac{\delta^5d}{2}$ edges into $B$. The probability that a given edge in the matching does not extend to a path of length three between $A$ and $B$ is thus at most $2(1-p_3)^{\delta^5d/2}\le 2\exp\left\{-\delta^6/2\right\}\le \exp\left\{-\delta^7\right\}$. These events are independent for the edges of the matching, and thus the probability that there is no path in $G_{p_3}$ connecting $A$ and $B$ is at most $$\exp\left\{-\frac{\delta^{7}\cdot \delta^2Ck}{d}\right\}\le \exp\left\{-\frac{C\delta^{10}n}{d}\right\}.$$

Since there are at most $2^{\frac{n}{\delta d}}$ possible component-respecting partitions $A\sqcup B$ of $V_L(G(2))$, by choosing $C= \frac{2}{\delta^{20}}$, we conclude that with probability at least
\begin{align*}
    1-\exp\left\{-\frac{C\delta^{10}n}{d}+\frac{n}{\delta d}\right\}\ge 1-\exp\left\{-\frac{\sqrt{C}n}{d}\right\},
\end{align*}$G_p$ contains a component of order at least $y(\epsilon)n-5\delta n-\delta n=y(\epsilon)n-\frac{10}{C^{1/20}}n$, as required. Furthermore, the second-largest component is of order at most $(1+o_d(1))\frac{10n}{C^{1/20}}$.
\end{proof}

\begin{remark}\label{r: key theorem}
A careful reading of the proof of Theorem \ref{th: weak expander} shows that we do not use the full extent of our assumption $i(G)\ge C$. In fact, a milder requirement suffices: for any $S\subseteq V(G)$ with $\delta n \le |S|\le \frac{n}{2}$, we require that $\partial_G(S)\ge C\delta n$. Furthermore, we note that it suffices to choose $\delta=\frac{\epsilon^2}{10}$, and so we can take $C_0=\Omega\left( \delta^{-20}\right)=\Omega\left(\epsilon^{-40}\right)$. Finally, we note that key techniques of the proof of Theorem \ref{th: weak expander} can easily be adapted to prove that a percolated hypercube $Q^d_p$ in the supercritical regime, when $p=\frac{1+\epsilon}{d}$, typically contains a unique giant component of the correct asymptotic order, namely, $y(\epsilon)|V(Q^d)|$. Indeed, by Harper's inequality \cite{H64}, which states that $i_k(Q^d)\ge d-\log_2 k$, we have that the edge-boundary of an arbitrary set $S$ of order $|S|=\frac{|V(Q^d)|}{2^C}=2^{d-C}$ has size at least $C|S|$. Following the lines of our proof arguments of Theorem \ref{th: weak expander}, we can show that \textbf{whp} $Q^d_p$ contains constantly many components of linear order, whose sizes sum up asymptotically to $y(\epsilon)|V(Q^d)|$. Using a standard sprinkling argument, we can then show that \textbf{whp} these linear sized components merge into a single giant component.
\end{remark}

\section{Tightness of Theorem \ref{th: weak expander}}\label{s: construction}
We begin by describing the construction which will be used for the proof of Theorem \ref{th: construction}.

\subsection{Construction for Theorem \ref{th: construction}}
\paragraph{First construction}\label{first construction}
Let $d_1, d$, and $n$ be even integers satisfying the following conditions:
\begin{align}\label{e:dd_1}
d = \omega(1)  \text{ is such that } \, d+2 \text{ divides } n,\,\, d_1=\frac{d}{Ce^2}, \text{ and } n=\omega\left(d^2\right).
\end{align}
Let $H_0$ be an $\left(\frac{n}{d+2},d_1,\lambda\right)$-graph with $\lambda=O\left(\sqrt{d_1}\right)$ (note that $d_1\cdot\frac{n}{d+2}$ is even, since $d_1$ is even and $d+2$ divides $n$).
Indeed, \textbf{whp} a random $d_1$-regular graph on $\frac{n}{d+2}$ vertices satisfies this (see \cite{KS06}, \cite{TP19}, \cite{S23} and the references therein for even stronger results). Note that, by the expander mixing lemma (see \cite{AC88}), it follows that $i(H_0)\ge \frac{d_1}{3}$.

For every $v\in V(H_0)$, let $F_0(v)$ be a $(d+1)$-clique and let $M(v) \subseteq E\left(F_0(v)\right)$ be an arbitrary matching of size $\frac{d-d_1}{2}$. Let $F_1(v) = F_0(v) - M(v)$ be the graph obtained by deleting the edges of this matching from the clique.

We now consider the graph $G$ formed by taking the union of $H_0$ and $\bigcup_{v \in V(H_0)} F_1(v)$ and joining each $v \in V(H_0)$ to each vertex in $V(M(v)) \subseteq V(F_1(v))$ (see Figure \ref{f: const 1} for an illustration of $G$).
\begin{figure}[H]
\centering
\includegraphics[width=0.5\textwidth]{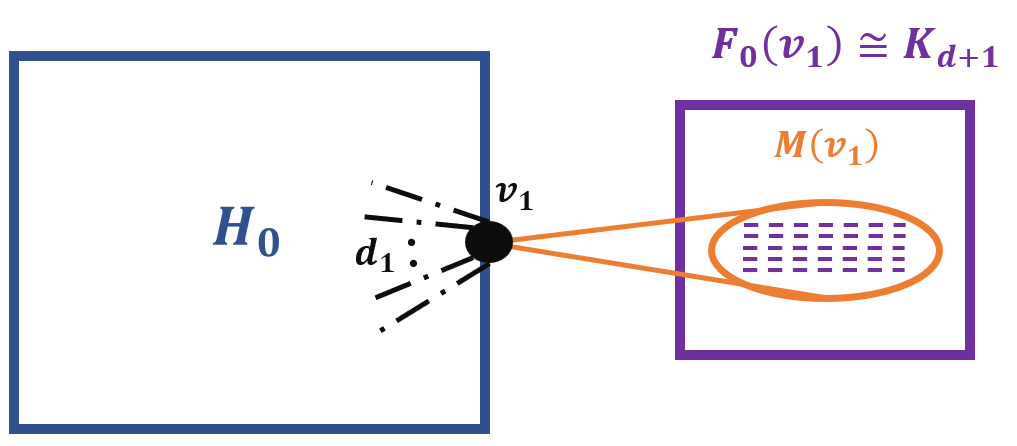}
\caption{An illustration of the \nameref{first construction}}
\label{f: const 1}
\end{figure}

Let us first show that our graph $G$ satisfies the assumptions of Theorem \ref{th: construction}, noting that $\frac{d_1}{5d}\ge \frac{1}{5Ce^2}\ge \frac{1}{40C}$.
\begin{claim}\label{properties of G}
Let $G$ be the graph built according to the \nameref{first construction}. Then $G$ is a $d$-regular $n$-vertex graph with $i(G)\ge \frac{d_1}{5d}$.
\end{claim}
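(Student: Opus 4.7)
The first part, that $G$ is $d$-regular on $n$ vertices, is a routine counting exercise. By (\ref{e:dd_1}) the vertex count is $|V(H_0)| + (d+1)|V(H_0)| = (d+2)\cdot n/(d+2) = n$. Each $v \in V(H_0)$ has degree $d_1$ in $H_0$ plus $|V(M(v))| = d - d_1$ from the joining edges. Writing $B(v) := \{v\} \cup V(F_1(v))$, each $u \in V(F_1(v))$ has degree exactly $d$ inside $B(v)$: a vertex in $V(M(v))$ loses one clique edge but gains the edge to $v$, while a vertex in $V(F_1(v)) \setminus V(M(v))$ keeps all $d$ clique edges and has no edge to $v$.

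For the isoperimetric bound, I would fix $S \subseteq V(G)$ with $1 \leq |S| \leq n/2$ and set $\tilde S := S \cap V(H_0)$, $s_v := |S \cap B(v)|$. Since the edge set of $G$ partitions into $E(H_0)$ and the pairwise disjoint $E(G[B(v)])$, the starting point is the decomposition
$$\partial_G(S) \;=\; \partial_{H_0}(\tilde S) \;+\; \sum_{v \in V(H_0)} \partial_{G[B(v)]}(S \cap B(v)).$$
Partition $V(H_0) = V_0 \sqcup V_1 \sqcup V_2$ according to whether $s_v$ equals $0$, lies in $[1,d+1]$, or equals $d+2$. The crucial use of the hypothesis $|S| \leq n/2$ is that it forces $|V_2| \leq |V(H_0)|/2$ (since $|S| \geq (d+2)|V_2|$ and $n/2 = (d+2)|V(H_0)|/2$), so $i(H_0) \geq d_1/3$ yields $\partial_{H_0}(V_2) \geq (d_1/3)|V_2|$.

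The internal gadget contributions are handled by a simple degree-sum argument. Inside $G[B(v)]$ every vertex has degree $d$ except $v$, whose degree is $d - d_1$; applying $\sum_{u \in T}\deg_{G[B(v)]}(u) \leq |T|(|T|-1) + \partial_{G[B(v)]}(T)$ to both $T := S \cap B(v)$ and to $B(v) \setminus T$ gives
$$\partial_{G[B(v)]}(T) \;\geq\; \min(|T|, d+2-|T|)\cdot\bigl(d - d_1 - \min(|T|, d+2-|T|) + 1\bigr),$$
and since $C \geq 1$ implies $d_1 \leq d/e^2 < d/4$, this quantity is at least $d/4$ whenever $v \in V_1$.

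Combining the two ingredients requires a small piece of bookkeeping, which I expect to be the only real obstacle. An $H_0$-edge joining $V_2$ to a vertex of $V_1 \cap \tilde S$ is counted in $\partial_{H_0}(V_2)$ but \emph{not} in $\partial_G(S)$, and by the $d_1$-regularity of $H_0$ there are at most $d_1|V_1|$ such edges, so $\partial_{H_0}(\tilde S) \geq \partial_{H_0}(V_2) - d_1|V_1|$. Together with the gadget bound this yields
$$\partial_G(S) \;\geq\; (d_1/3)|V_2| + (d/4 - d_1)|V_1|.$$
Using $|S| \leq (d+2)|V_2| + (d+1)|V_1|$, the desired inequality $\partial_G(S) \geq (d_1/(5d))|S|$ reduces coefficient-wise to $d_1/3 \geq (d_1/(5d))(d+2)$ and $d/4 - d_1 \geq (d_1/(5d))(d+1)$, both of which hold with room to spare for $d = \omega(1)$. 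What makes the argument work is that each partial gadget contributes $\Omega(d)$ rather than $\Omega(d_1)$ to $\partial_G(S)$, so even after paying $d_1$ per partial gadget to correct the $H_0$-overcount, a positive, dominant coefficient on $|V_1|$ survives.
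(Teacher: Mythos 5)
Your proof is correct, and it takes a genuinely different route from the paper's. The paper argues by a four-way case analysis on $|S\cap V(H_0)|$: whether it is at least a $\frac{4}{5(d+1)}$-fraction of $|S|$, and if so whether it is below half, between half and two thirds, or above two thirds of $|V(H_0)|$; in the remaining case it locates a fifth of $S$ inside gadgets whose hub lies outside $S$ and exploits the near-clique structure there. You instead set up a single exact decomposition of $\partial_G(S)$ into $H_0$-edges plus gadget-internal boundaries, classify gadgets as empty, partial, or full, and close with a coefficient-wise comparison against $|S|\le (d+2)|V_2|+(d+1)|V_1|$. All the individual steps check out: the partition of $E(G)$ into $E(H_0)$ and the $E(G[B(v)])$ is exact; $|S|\le n/2$ does force $|V_2|\le|V(H_0)|/2$ so the expander bound applies to $V_2$; the degree-sum bound applied to both $T$ and $B(v)\setminus T$ gives $\partial_{G[B(v)]}(T)\ge m(d-d_1+1-m)$ for $m=\min(|T|,d+2-|T|)$, which by concavity is minimised at $m=1$ on the relevant range and equals $d-d_1>d/4$ there; and the correction term $d_1|V_1|$ correctly accounts for the only $H_0$-edges counted in $\partial_{H_0}(V_2)$ but absent from $\partial_{H_0}(\tilde S)$, namely those into $V_1\cap\tilde S$. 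Your approach buys uniformity --- in particular the paper's awkward final case $|S_1|\ge\frac{2}{3}|V(H_0)|$, where the expander bound on $S_1$ itself is useless, is absorbed automatically since those extra hubs sit in $V_1$ and earn $d/4$ each --- at the cost of the slightly delicate double-counting correction, which you identified and handled correctly.
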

\begin{proof}
We first note that $G$ is $d$-regular. Indeed, every vertex $v\in V(H_0)$ has $d_1$ neighbours in $H_0$, and $d-d_1$ neighbours in $V\left(M(v)\right)\subseteq V\left(F_1(v)\right)$. Furthermore, every vertex in $V\left(F_1(v)\right)\setminus V\left(M(v)\right)$ has $d$ neighbours in $G$, since $F_0(v)\cong K_{d+1}$ and the degree of vertices not in $V\left(M(v)\right)$ has not been changed. 

Furthermore, $G$ has $n$ vertices. Indeed, $\left|V(H_0)\right| = \frac{n}{d+2}$ and for each $v \in V(H_0)$, $|F_1(v)| = d+1$. Therefore,
\begin{align*}
    |V(G)|=\frac{n}{d+2}+(d+1)\frac{n}{d+2}=n.
\end{align*}

Let us now show that $i(G)\ge \frac{d_1}{5d}$. Given $S\subseteq V(G)$ with $|S|\le \frac{n}{2}$, let $S_1=S\cap V(H_0)$ and for every $v\in V(H_0)$, let $S_F(v)=S\cap \left(\{v\}\cup V\left(F_1(v)\right)\right)$.

Suppose first that $|S_1|\ge \frac{4|S|}{5(d+1)}$ and $|S_1|\le \frac{\left|V(H_0)\right|}{2}$. Then, since
\[
\frac{e(S,S^C)}{|S|}\ge \frac{e_{H_0}(S_1, S_1^C)}{|S|},
\]
it follows from the fact that $i(H_0) \geq \frac{d_1}{3}$ that
\begin{align*}
    \frac{e_{H_0}(S_1, S_1^C)}{|S|}\ge \frac{\frac{d_1|S_1|}{3}}{|S|}\ge \frac{d_1}{4d}.
\end{align*}
If $|S_1|\ge\frac{4|S|}{5(d+1)}$ and $\frac{\left|V(H_0)\right|}{2}\le |S_1|\le \frac{2\left|V(H_0)\right|}{3}$, then
\begin{align*}
    \frac{e_{H_0}(S_1, S_1^C)}{|S|}\ge\frac{\frac{d_1|S_1^c|}{3}}{|S|} \geq \frac{d_1\left|V(H_0)\right|}{9|S|}\ge\frac{2d_1\left|V(H_0)\right|}{9(d+2)\left|V(H_0)\right|}\ge \frac{d_1}{5d}.
\end{align*}

Now, suppose that $|S_1|<\frac{4|S|}{5(d+1)}$. Then, we may assume that there is some subset $S_2 \subseteq S$ of size at least $\frac{|S|}{5}$ which lies in some union of $S_F(v)$ where $v \not\in S$. Then, since $S = \bigsqcup_{v \in V(H_0)} S_F(v)$ we can see that
\begin{equation}\label{e:split}
\frac{e(S,S^C)}{|S|}\ge \frac{1}{5} \min_{v \in S_2} \frac{e(S_F(v),S^C)}{|S_F(v)|}.
\end{equation}
However, for each $v \in S_2$ we have that $S_F(v)$ meets $F_1(v)$ and does not contain $v$. Since $F_1(v)$ is a $(d+1)$-clique with a matching removed, we have that
\begin{align*}
    e(S_F(v), S^C)&\ge e(S_F(v), F_1(v)\setminus S_F(v))\\
    &\ge |S_F(v)|\left(d+1-|S_F(v)|\right)-|S_F(v)|=|S_F(v)|\left(d-|S_F(v)|\right).
\end{align*}
Note that as long as $|S_F(v)|<d$, the above is at least $|S_F(v)|$. On the other hand, if $|S_F(v)|=d$, then since $S_F(v)$ does not contain $v$ and $v$ has $d-d_1$ neighbours in $F_1(v)\setminus \{v\}$, we have that
\begin{align*}
    e(S_F(v), S^C)&\ge e(v, S_F(v))\ge d-d_1-1.
\end{align*}
Hence, in this case by \eqref{e:split},
\[
\frac{e(S,S^C)}{|S|} \geq \frac{1}{5}\cdot\frac{d-d_1-1}{d} \geq \frac{d_1}{5d},
\]
where the last inequality holds by our assumption on $d_1$.

Finally, suppose that $|S_1|\ge \frac{2\left|V(H_0)\right|}{3}$. Then, since $|S|\le \frac{n}{2}$, there is a set $S_1'\subseteq S_1$, $|S_1'|\ge \frac{\left|V(H_0)\right|}{6}$, such that for every $v\in S_1'$, $S_F(v)\cap F_1(v)\neq F_1(v)$. In particular, each such copy contributes at least $d-d_1$ edges to the boundary of $S$. Therefore,
\begin{align*}
    \frac{e(S,S^C)}{|S|}\ge \frac{(d-d_1)\left|V(H_0)\right|}{6|S|}\ge \frac{d-d_1}{3(d+2)}\ge \frac{d_1}{5d},
\end{align*}
where once again the last inequality holds by our assumption on $d_1$.
\color{black}
\end{proof}
Note that $i(G)=\Theta\left(\frac{d_1}{d}\right)$. Indeed, let $S=\{v\}\cup F_1(v)$. Then $|S|=d+2$ and $\partial_G(S)=d_1$, and thus $i(G)\le \frac{d_1}{d+2}$.

Theorem \ref{th: construction} will be an immediate corollary of the following Lemma.
\begin{lemma}\label{l: technical construction}
Let $G$ be the graph built according to the \nameref{first construction}. Let $p=\frac{C}{d}$. Then, \textbf{whp}, all the components of $G_p$ have size at most $3d\log n$.
\end{lemma}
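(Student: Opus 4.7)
The overall strategy is to reduce the component analysis of $G_p$ to subcritical percolation on the much sparser graph $H_0$, by exploiting the block structure of the construction. Since the only edges of $G$ leaving a blob $B_v \coloneqq \{v\} \cup V(F_1(v))$ are the $H_0$-edges incident to its root $v$, a $G_p$-component can span multiple blobs only via percolated $H_0$-edges, and the parameters $d_1 = d/(Ce^2)$ and $p = C/d$ are chosen so that this induced percolation on $H_0$ is deeply subcritical.

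Concretely, I would define the \emph{blob quotient graph} $\mathcal{B}$ on vertex set $V(H_0)$, with $uv \in E(\mathcal{B})$ iff $uv \in E(H_0)$ is retained in $G_p$; note that $\mathcal{B}$ has the same distribution as $(H_0)_p$. The key structural observation is: if a component $K$ of $G_p$ meets at least two distinct blobs, then every root $v$ with $B_v \cap K \neq \varnothing$ lies in $K$, and all such roots belong to a single connected component of $\mathcal{B}$. Indeed, any path in $K$ between two blobs must leave each blob via an $H_0$-edge incident to the blob's root, and chaining these $H_0$-edges yields a walk in $\mathcal{B}$. Writing $M$ for the maximum component size of $\mathcal{B}$, it follows that every component of $G_p$ has size at most $(d+2)M$ (with the trivial case $|K| \leq d+2$ when $K$ is confined to a single blob also covered).

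It remains to prove $M < 2\log n$ \textbf{whp}. Since $H_0$ is $d_1$-regular on $n/(d+2)$ vertices, Lemma~\ref{l: trees} gives that the number of subtrees of $H_0$ on $m$ vertices is at most $\tfrac{n}{d+2}(ed_1)^{m-1}$. Each such subtree survives in $\mathcal{B}$ with probability $p^{m-1}$, and by direct substitution
\begin{align*}
    ed_1 p = e \cdot \frac{d}{Ce^2} \cdot \frac{C}{d} = e^{-1}.
\end{align*}
Hence the expected number of $m$-vertex subtrees of $\mathcal{B}$ is at most $\tfrac{n}{d+2}e^{-(m-1)}$, which is $o(1)$ at $m = 2\log n$. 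Since any component of size $\geq m$ contains a spanning subtree on $m$ vertices, Markov's inequality gives $M < 2\log n$ \textbf{whp}. Combining with the previous paragraph, \textbf{whp} every component of $G_p$ has size at most $(d+2)(2\log n - 1) \leq 3d\log n$, where the last inequality amounts to $(d-4)\log n + (d+2) \geq 0$ and so holds for all $d \geq 4$, and in particular for large enough $d$ as ensured by $d = \omega(1)$.

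The only delicate point is the structural claim about how $G_p$-components project onto $\mathcal{B}$-components, which requires a brief path-chasing argument through blob roots; beyond this, the proof is a clean first-moment calculation hinging on the construction being precisely calibrated so that the offspring parameter $d_1 p = e^{-2}$ is a constant strictly below $1$.
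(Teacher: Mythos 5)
Your proof is correct and follows essentially the same route as the paper: both arguments reduce a large component of $G_p$ to a large percolated subtree of $H_0$ and then apply the first-moment tree count of Lemma~\ref{l: trees} with the calibration $e d_1 p = e^{-1}$. Your version merely makes the blob-contraction step explicit (the paper asserts the existence of the subtree $T'\subseteq H_0$ more tersely) and bounds the largest component of $(H_0)_p$ at the single scale $m=2\log n$ rather than union-bounding over all component sizes $k$; these are presentational differences, not a different method.
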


\begin{proof}[Proof of Lemma \ref{l: technical construction}]
Let $k\ge 3d\log n$ and let $\mathcal{A}_k$ be the event that $G_p$ contains a component of size $k$. Note that if $\mathcal{A}_k$ occurs, then there is a tree $T$ of size $k$ in $G$, of all whose edges lie in $G_p$. It follows that there is a subtree $T'\subseteq T$ of size at least $k_0\coloneqq \frac{k}{d+1}\ge 2\log n$, such that $T'\subseteq H_0\subseteq G$, all of whose edges lie in $G_p$. Recalling that $H_0$ is a $d_1$-regular graph on $\frac{n}{d+2}$ vertices, by Lemma \ref{l: trees} there are at most $\frac{n}{d+2}(ed_1)^{k_0-1}$ ways to choose $T'$, and we retain its edges in $G_p$ with probability $p^{k_0-1}$. Therefore, since $pd_1 = \frac{1}{e^2}$, we have
\begin{align*}
    \mathbb{P}\left[\mathcal{A}_k\right]\le \frac{n}{d+2}\cdot (ed_1)^{k_0-1}p^{k_0-1}=\frac{n}{d+2}\cdot \left(\frac{1}{e}\right)^{k_0-1}=o\left(\frac{1}{n}\right).
\end{align*}
Hence, by the union bound, \textbf{whp} $\mathcal{A}_k$ does not hold for all $3d \log n \leq k \leq n$.
\end{proof}

\subsection{Construction for Theorem \ref{th: construction 2}}
\paragraph{Second construction}\label{second construction} 
Let $C, d, n$, and $p$ be as in the statement of Theorem \ref{th: construction 2}. Let $C_1\coloneqq 3C$, let $d_1\coloneqq d-C_1$ and let $t\coloneqq \frac{n}{d_1+1}$. We further assume that $C_1, d_1$, $t$, and $n$ satisfy the needed parity assumptions for what follows.

Let $H$ be a $C_1$-regular graph on $n$ vertices, with $i(H)\ge \frac{C_1}{3}=C$ (indeed, \textbf{whp} a random $C_1$-regular graph on $n$ vertices satisfies this). Since $t=\frac{n}{d_1+1}=\frac{n}{d-3C}\ge \frac{n}{d}\ge 10C$, we conclude that there exists an equitable (proper) colouring of $H$ in $t$ colour classes, $A_1,\ldots, A_t$, with each colour class containing exactly $d_1+1$ vertices \cite{HS70}. We form $G$ by adding to $H$ all the possible edges in $H[A_j]$, that is, $G[A_j]\cong K_{d_1+1}$, for every $j\in[t]$. Since $d=d_1+C_1$, note that $G$ is a $d$-regular graph on $t(d-3C+1)$ vertices. Furthermore, $i(G)\ge i(H)\ge C$. 

With this construction at hand, we are ready to prove Theorem \ref{th: construction 2}.
\begin{proof}[Proof of Theorem \ref{th: construction 2}]
Note that, for every $j\in [t]$, the edges between $A_j$ and $V(G)\setminus A_j$ are those in $H$. Let $X$ be the number of sets $A\in\{A_1, \ldots, A_t\}$, such that $e_{H_p}(A, V(H)\setminus A)=0$.

For each fixed $j\in[t]$ we have that $e_H(A_j, V(H)\setminus A_j)=C_1(d_1+1)$. The probability this set is disjoint from the rest of the graph in $H_p$ is 
\begin{align*}
    (1-p)^{C_1(d_1+1)}\ge \exp\left\{-2\frac{C_1(d-C_1+1)}{d}\right\}\ge \exp\left\{-4C_1\right\},    
\end{align*}
where we used $d\ge 7C=\frac{7C_1}{3}$. Hence, $\mathbb{E}[X]\ge \exp\{-4C_1\}t$. Now, note that changing one edge can change the value of $X$ by at most two. Hence, by Lemma \ref{l: azuma},
\begin{align*}
    \mathbb{P}\left[X\le \frac{\mathbb{E}[X]}{2}\right]\le 2\exp\left\{-\frac{\exp\{-8C_1\}t^2/4}{2\cdot C_1np\cdot 4}\right\}\le 2\exp\left\{-\frac{\exp\{-8C_1\}t}{12}\right\}.
\end{align*}
Thus, with probability at least $1-\exp\left\{-\exp\{-10C_1\}t\right\}\ge 1-\exp\left\{-\frac{n}{\exp\{30C\}d}\right\}$, there are at least $\frac{\exp\{-4C_1\}t}{2}$ disjoint sets $A\in \{A_1, \ldots, A_t\}$ in $G_p$. Since $G[A_j]\cong K_{d_1+1}$ for every $j\in [t]$, we have that with probability $1-o_d(1)$ there exists a component of order $\left(2\epsilon-O(\epsilon^2)\right) d_1$ in $G_p[A_j]$. Therefore, with probability at least $1-\exp\left\{-\frac{n}{\exp\{30C\}d}\right\}-o_d(1)$, there are at least two components of order $\left(2\epsilon-O(\epsilon^2)\right) d_1\ge \epsilon d$ in $G_p$, where we used our assumption that $d\ge 7C$.
\end{proof}

\section{Discussion and avenues for future research}\label{s: discussion}
In this paper, we aimed to capture the minimal requirements on a host graph $G$, guaranteeing the percolated subgraph $G_p$ undergoes a similar phase transition, with respect to the size of the largest component, as that in $G(n,p)$. Theorem \ref{th: weak expander}, together with the two constructions (Theorems \ref{th: construction} and \ref{th: construction 2}) showing that it is qualitatively tight, gives a qualitative answer to this question. In Theorems \ref{th: general statement b} and \ref{th: general statement}, we further demonstrated the intrinsic connection between the isoperimetry of the host graph, both global and local, to the existence of large components in the percolated subgraph.

There are several natural questions which arise in this context. While Theorem \ref{th: weak expander} gives a qualitative answer to Question \ref{q: refined}, one can ambitiously aim for a more precise \textit{quantitative} answer as to the minimum requirements on $i(G)$, or a more restricted notion of expansion, which guarantees this behaviour.
\begin{question}\label{q: C and eps}
Given $\epsilon>0$ and $d\to\infty$, what is the minimal $C(\epsilon,d)$ such that every $d$-regular $n$-vertex graph $G$ with $i(G) \geq C(\epsilon,d)$ is such that $G_p$ \textbf{whp} contains a unique component of linear order, specifically one of order asymptotic $y(\epsilon)n$, when $p=\frac{1+\epsilon}{d}$?
\end{question}
Theorem \ref{th: weak expander} implies that any function $C(\epsilon,d)=\omega(1)$ is sufficient, and Theorem \ref{th: construction} implies that it is necessary to take $C(\epsilon,d)\geq \frac{1}{40(1+\epsilon)}$. It would be interesting to determine if the answer is in fact independent of $d$ (or even $\epsilon$). Furthermore, we note that the regularity assumption in Theorem \ref{th: weak expander} can be somewhat weakened. With slight technical care, we can assume instead that $\frac{\Delta(G)}{\delta(G)}=1+o_d(1)$, where $\Delta(G)$ is the maximum degree of $G$ and $\delta(G)$ is the minimum degree of $G$. It could be interesting to determine the (irregular) degree assumptions that are necessary for Theorem \ref{th: weak expander} to hold.

It is known (see, for example, \cite[Theorem 1]{DEKK22}) that in subcritical percolation on any $d$-regular graph the largest component typically has logarithmic order. Theorem \ref{ER thm} demonstrates that in $G(n,p)$ the second-largest component is of logarithmic order in the supercritical regime, which is a particularly simple facet of the \emph{duality principle} which holds for $G(n,p)$, which broadly says if we remove the giant component from a supercritical random graph, what remains resembles a subcritical random graph. Similar behaviour, at least in terms of the order of the second-largest component, is known to hold in many other percolation models such as percolation on $(n,d,\lambda)$-graphs \cite{FKM04} or on hypercubes \cite{BKL92} (and in fact on any regular high-dimensional product graph \cite{DEKK22}).

Whilst Theorem \ref{th: weak expander} shows that relatively weak assumptions on the expansion of $G$ are sufficient to demonstrate a threshold for the existence of a linear sized component, Theorem \ref{th: construction 2} shows that such assumptions alone are not sufficient to effectively bound the size of the second-largest component in the supercritical regime.

It is an interesting open problem to determine natural conditions on the host graph which guarantee that the phase transition quantitatively resembles that of $G(n,p)$.
\begin{question}\label{q: ERCP}
    Let $G$ be a $d$-regular $n$-vertex graph with $d\to\infty$. Let $\epsilon>0$ be a sufficiently small constant, and let $p=\frac{1+\epsilon}{d}$. What are the minimal requirements on $G$ such that \textbf{whp} the largest component in $G_p$ is of asymptotic order $y(\epsilon)n$, and the second-largest component is of order $O(\log n)$?
\end{question}

It is worth noting that whilst the assumptions of Theorem \ref{th: general statement} impose a restriction on the \textit{minimum} degree of the host graph $G$, an averaging argument shows that the assumptions of Theorem \ref{th: general statement b} impose a restriction only on the \emph{average} degree of $G$. Motivated by their work on percolation on graphs with large minimum degree, Krivelevich and Samotij \cite{KS14} ask whether there is a quantitatively similar threshold for the existence of a large component in graphs with large average degree. In particular, if $G$ has average degree $d$ and $p=\frac{1+\epsilon}{d}$, they ask if \textbf{whp} $G_p$ contains a component (or indeed, even a cycle) with $\Omega(d)$ vertices. 

As we mentioned before, our argument of Theorem \ref{th: weak expander} allows also for $\epsilon\coloneqq \epsilon(d)$ which tends \textit{slowly} to zero as $d$ tends to infinity. We have not tried to optimise this dependency. It could be very interesting to analyse the correct dependency, and in particular to see what could be said about the size of the largest component, and of the component sizes in general, in the slightly supercritical regime.
\bibliographystyle{abbrv}
\bibliography{perc} 
\end{document}